\theoremstyle{plain}
\newtheorem{thm}{Theorem}[section]
\newtheorem{lem}{Lemma}[section]
\theoremstyle{definition}
\newtheorem{dfn}{Definition}[section]
\newtheorem{prf}{Proof}[section]
\title{On additive property of finitely additive measures}
\author{Ryoichi Kunisada}
\address{Faculty of Education and Integrated Arts and Science, Waseda University, Shinjuku-ku, Tokyo 169-8050, Japan}
\email{rkunisada@aoni.waseda.jp}
\thanks{This paper is a part of the outcome of research  performed under a Waseda University Grant for Special Research Projects (Project number: 2018K-152).}
\date{}
\begin{document}
\maketitle

\begin{abstract}
By the additive property, we mean a condition under which $L^p$ spaces over finitely additive measures are complete. 
Basile and Rao gives a necessary and sufficient condition that a finite sum of finitely additive measures has the additive property. We generalize this result to the case of a countable sum of finitely additive meaures. An application of this result to density measures are also presented.
\end{abstract}

\bigskip

\section{Introduction}
Let $X$ be a set and $\mathscr{F}$ be a field of subsets of $X$. A function $\mu$ on $\mathscr{F}$ is called a 
{\it finitely additive measure} or a {\it charge} if the following conditions are satisfied.
\begin{enumerate}
\item $\mu(\emptyset) = 0$,
\item $\mu(A \cup B) = \mu(A) + \mu(B)$ if $A, B \in \mathscr{F}, A \cap B = \emptyset$.
\end{enumerate}

The triple $(X, \mathscr{F}, \mu)$ is called a {\it finitely additive measure space} or {\it charge space}. In what follows, we use the term ‘charge’ exclusively. Obviously, charges are a generalization of measures obtained by replacing countable additivity of measures with finite additivity. The theory of charges was developed systematically in [2]. In this book, various notions and results in measure theory are generalized and transferred to charge spaces.
Among these, the notion of $L^p$ spaces over charges is of particular importance for applications of charge theory. Recall that one of the important conclusions of measure theory is the completeness of $L^p$ spaces over measures. Unfortunately however, $L^p$ spaces over charges are not complete in general. The condition under which $L^p$ spaces over charges are complete is given by a certain additivity property which is intermediate between countable additivity and finite additivity (see [1]): Let $(X, \mathscr{F}, \mu)$ be a charge space. We say that $\mu$ has the {\it additive property} if for any $\varepsilon > 0$ and increasing sequence $A_1 \subseteq A_2 \cdots A_n \subseteq \cdots$ in $\mathscr{F}$, there exists a set $B \in \mathscr{F}$ such that
\begin{enumerate}
\item $\mu(B) \le \lim_{i \to \infty}  \mu(A_i) + \varepsilon$,
\item $\mu(A_i \setminus B) = 0$ \quad for every \ $i = 1,2, \ldots$. 
\end{enumerate}

The study of the additive property for concrete examples of charges were developed, for instance, in [3], [4], [5].

In this paper, we deal with the additive property of sums of charges. Note that if charges $\mu$ and $\nu$ on $(X, \mathscr{F})$ have the additive property, $\mu+\nu$ does not necessarily have the additive property. In [ ], the necessary and sufficient condition that finite sums of charges have the additive property has been obtained.  One of the main aims of this paper is to generalize the result to the case of countable sums of charges. Further, we give an application of this result to the additive property of density measures, which gives a simpler proof of the main result of [4]. This can be seen as another main result of this paper.

The paper is organized as follows. In Section 2, we introduce some notions and results which will be used throughout the paper. In particular, the notions of singularity and strong singularity of charges and  a Borel measure on the stone space of an algebra of sets play an important role in studying the additive proeperty.

In Section 3, we chronicle equivalent conditions to the additive property, which illustrate the importance of the additive property in the theory of charges. In fact, these theorems asserts that some of the main theorems in meaure theory, including the completeness of $L^p$ spaces and the Radon-Nikodym theorem, are also valid for charges having the additive proeperty.

Section 4 deals with one of the main results of this paper, i.e., we prove a necessary and sufficient condition that a countable sum of charges has the additive property. This is done by using one of the equivalent formulations of the additive property introduced in Section 3.

Section 5 is devoted to an application of the result of Section 4.

\section{Preliminaries}
In what follows, if not otherwise stated, charges will always be nonnegative and bounded, i.e., any given charge space $(X, \mathscr{F}, \mu)$, we have $\mu(A) \ge 0$ for every $A \in \mathscr{F}$ and $\mu(X) < \infty$.

First, for a charge space $(X, \mathscr{F}, \mu)$ we introduce an extension of $\mu$ to a regular Borel measure of the stone space of $\mathscr{F}$. This method plays an important role for formulating various notions concerning charges. 
Regarding $\mathscr{F}$ as a Boolean algebra, by the Stone representation theorem, there exists a totally disconnected compact space $F$ and a natural Boolean isomorphism $\phi : \mathscr{F} \rightarrow \mathscr{C}$, where $\mathscr{C}$ is the algebra of the clopen subsets of $F$.

Now define a charge $\hat{\mu}$ on $\mathscr{C}$ by $\hat{\mu}(\phi(A)) = \mu(A)$ and we get a charge 
space $(F, \mathscr{C}, \hat{\mu})$.
Since any infinite union of clopen subsets can not be a clopen subset, $\hat{\mu}$ is countably additive on $\mathscr{C}$ and thus by the E. Hopf extension theorem, we can extend it to a countable additive measure on the $\sigma$-algebra generated by $\mathscr{C}$, i.e., a Baire measure on $F$. This can also be extended to a regular Borel measure on $F$ in a unique way. We still denote it by $\hat{\mu}$ and thus we obtain a measure space $(F, \mathcal{B}(F), \hat{\mu})$, where $\mathcal{B}(F)$ is the Borel sets of $F$. We denote  by supp $\mu$ the support of $\hat{\mu}$ in $F$. 

\medskip
Following [2], we consider the notions of absolute continuity and singularity for charges. 

\begin{dfn}
Let $\mu$ and $\nu$ be charges on $(X, \mathscr{F})$.
\begin{enumerate}
\item We say that $\nu$ is {\it absolutely continuous} with respect to $\mu$ if for any $\varepsilon > 0$, there exists $\delta > 0$ such that $\nu(A) < \varepsilon$ whenever $\mu(A) < \delta$, where $A \in \mathscr{F}$. In this case, we write $\nu \ll \mu$.
\item We say that $\mu$ and $\nu$ are {\it singular} if for every $\varepsilon > 0$, there exists a set $D \in \mathscr{F}$ such that $\mu(D) < \varepsilon$ and $\nu(D^c) < \varepsilon$.
In this case, we write $\mu \perp \nu$.
\end{enumerate}
\end{dfn}

Next we define the notions of weakly absolute continuity and strong singularity.

\begin{dfn} 
Let $\mu$ and $\nu$ be charges on $(X, \mathscr{F})$.
\begin{enumerate}
\item We say that $\nu$ is {\it weakly absolutely continuous} with respect to $\mu$ if $\nu(A) = 0$ whenever $\mu(A) = 0$, where $A \in \mathscr{F}$. In this case, we write $\nu \prec \mu$.
\item We say that $\mu$ and $\nu$ are {\it strongly singular} if there exists a set $D \in \mathscr{F}$ such that $\mu(D) = 0$ and $\nu(D^c) = 0$. In this case, we write $\mu \ \rotatebox{90}{$\vDash$} \ \nu$.
\end{enumerate}
\end{dfn}

Obviously, these are ordinary notions of absolute continuity and singularity for measures in case that $\mathscr{F}$ is a $\sigma$-algebra and $\mu$ and $\nu$ are measures on it. In fact, as the following theorems show, absolute continuity and weakly absolute continuity, and singularity and strong singularity coincide respectively in this case. See [2] for the proofs.

\begin{thm} 
Let $\mu$ and $\nu$ be measures on $(X, \mathscr{F})$. Then we have the following results.
\begin{enumerate}
\item If $\nu$ is a bounded measure, then $\nu \ll \mu$ if and only if $\nu \prec \mu$.
\item $\nu \perp \mu$ if and only if  $\mu \ \rotatebox{90}{$\vDash$} \ \nu$.
\end{enumerate}
\end{thm}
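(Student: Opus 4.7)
The plan is to prove both statements by standard Borel--Cantelli-style constructions, exploiting the countable additivity of $\mu$ and $\nu$ on the $\sigma$-algebra $\mathscr{F}$.

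For part (1), the implication $\nu \ll \mu \Rightarrow \nu \prec \mu$ is immediate from the definition: if $\mu(A)=0$ then $\mu(A)<\delta$ for every $\delta>0$, which forces $\nu(A)<\varepsilon$ for every $\varepsilon>0$. For the converse I would argue by contradiction. Assuming $\nu \not\ll \mu$, one extracts an $\varepsilon_0 > 0$ and sets $A_n \in \mathscr{F}$ with $\mu(A_n) < 2^{-n}$ and $\nu(A_n) \geq \varepsilon_0$. Put $B_n = \bigcup_{k \geq n} A_k$ and $A = \bigcap_n B_n$. Countable subadditivity gives $\mu(B_n) \leq 2^{-n+1}$, whence $\mu(A) = 0$. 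Since $(B_n)$ is decreasing and $\nu$ is bounded, continuity from above yields $\nu(A) = \lim_n \nu(B_n) \geq \liminf_n \nu(A_n) \geq \varepsilon_0 > 0$, contradicting $\nu \prec \mu$. Boundedness of $\nu$ enters precisely at the application of continuity from above.

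For part (2), the direction from strong singularity to singularity is trivial: the same witness $D$ (with $\mu(D)=0$ and $\nu(D^c)=0$) serves for every $\varepsilon>0$. For the converse I would produce a witness by a limit construction. For each $n$ pick $D_n \in \mathscr{F}$ with $\mu(D_n) < 2^{-n}$ and $\nu(D_n^c) < 2^{-n}$, and set $D = \limsup_n D_n = \bigcap_n \bigcup_{k \geq n} D_k$. The subadditivity argument used above gives $\mu(D) = 0$. For the complementary half, $D^c = \bigcup_n \bigcap_{k \geq n} D_k^c$ is an increasing union, and $\bigcap_{k \geq n} D_k^c \subseteq D_n^c$ has $\nu$-measure below $2^{-n}$; continuity from below gives $\nu(D^c) = \lim_n \nu\bigl(\bigcap_{k \geq n} D_k^c\bigr) \leq \lim_n 2^{-n} = 0$.

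There is no genuine obstacle here: the statement is a classical textbook fact, and the only points requiring care are to notice where countable additivity on $\mathscr{F}$ is invoked (the $\limsup$/$\liminf$ manipulations) and where boundedness of $\nu$ is actually needed (continuity from above in part (1); it is not needed in part (2), where both tails go to zero).
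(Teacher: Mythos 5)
The paper does not prove this theorem at all; it states it as a known fact and defers to reference [2] (Bhaskara Rao and Bhaskara Rao, \emph{Theory of charges}), so there is no in-paper argument to compare against. Your proof is correct and is the standard one: both converse directions are Borel--Cantelli-type constructions, and you correctly locate the two places where hypotheses beyond finite additivity are used, namely that $\mathscr{F}$ must be a $\sigma$-algebra (which is the implicit standing assumption of the theorem, since $\mu$ and $\nu$ are measures) and that boundedness of $\nu$ is needed exactly for continuity from above in part (1) and nowhere in part (2). The one cosmetic point is that in part (1) the chain $\nu(A)=\lim_n\nu(B_n)\ge\liminf_n\nu(A_n)$ is most cleanly justified by noting $A_n\subseteq B_n$, hence $\nu(B_n)\ge\varepsilon_0$ for every $n$; but that is exactly what you intended and the argument stands.
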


Now we give formulations of these notions by using the extended measure space. The following is essentially due to [1].

\begin{thm} 
Let $\mu$ and $\nu$ be charges on $(X, \mathscr{F})$ and $\hat{\mu}$ and $\hat{\nu}$ be the extended measures on $(F, \mathcal{B}(F))$, respectively. Then the following statements hold:
\begin{enumerate}
\item $\nu \ll \mu$ if and only if $\hat{\nu} \prec \hat{\mu}$.
\item $\nu \prec \mu$ if and only if supp $\nu$ $\subseteq$ supp $\mu$.
\item $\nu \perp \mu$ if and only if $\hat{\nu} \ \rotatebox{90}{$\vDash$} \ \hat{\mu}$.
\item $\mu \ \rotatebox{90}{$\vDash$} \ \nu$ if and only if supp $\nu$ $\cap$ supp $\mu$ = $\emptyset$.
\end{enumerate}
\end{thm}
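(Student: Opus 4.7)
The plan is to treat the four equivalences separately, relying throughout on the outer and inner regularity of $\hat{\mu}$ and $\hat{\nu}$ on the compact Stone space $F$, on the fact that the clopens $\mathscr{C}$ form a basis for $F$ (so every open set is a union of clopens and every compact set is covered by finitely many of them), and on Theorem 2.1 to pass between $\ll$/$\prec$ and $\perp$/strong singularity on the measure-theoretic side.

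For (1) the converse is immediate: if $\hat{\nu} \prec \hat{\mu}$ then Theorem 2.1(1) gives $\hat{\nu} \ll \hat{\mu}$, and restricting to the clopens $\phi(A)$ yields $\nu \ll \mu$. The forward direction is the substantial one. Given $\nu \ll \mu$ and $E \in \mathcal{B}(F)$ with $\hat{\mu}(E) = 0$, I would pick $\delta > 0$ from the definition of $\nu \ll \mu$, use outer regularity of $\hat{\mu}$ to enclose $E$ in an open $U$ with $\hat{\mu}(U) < \delta$, and then use inner regularity of $\hat{\nu}$ to write $\hat{\nu}(U) = \sup \{ \hat{\nu}(K) : K \subseteq U \text{ compact} \}$. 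Each such $K$ is covered by finitely many clopens $\phi(A_1), \ldots, \phi(A_n) \subseteq U$ whose union is $\phi(B)$ with $\mu(B) \le \hat{\mu}(U) < \delta$, whence $\hat{\nu}(K) \le \nu(B) < \varepsilon$. Letting $\varepsilon \to 0$ yields $\hat{\nu}(E) = 0$.

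Statement (2) follows directly from the clopen-basis property. If $x \notin \text{supp}\,\hat{\mu}$, I pick a clopen $\phi(A)$ around $x$ with $\hat{\mu}(\phi(A)) = 0$, so $\mu(A) = 0$ forces $\nu(A) = 0$, and $x \notin \text{supp}\,\hat{\nu}$. Conversely, if $\text{supp}\,\hat{\nu} \subseteq \text{supp}\,\hat{\mu}$ and $\mu(A) = 0$, then the open set $\phi(A)$ is disjoint from $\text{supp}\,\hat{\mu}$, hence from $\text{supp}\,\hat{\nu}$, so $\nu(A) = \hat{\nu}(\phi(A)) = 0$.

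For (3), the forward direction selects $D_n \in \mathscr{F}$ with $\mu(D_n) < 2^{-n}$ and $\nu(D_n^c) < 2^{-n}$ and applies a Borel--Cantelli computation to $E = \limsup_n \phi(D_n)$ to obtain $\hat{\mu}(E) = 0$ and $\hat{\nu}(E^c) = 0$. For the converse, Theorem 2.1(2) supplies a Borel set $E$ with $\hat{\mu}(E) = 0$ and $\hat{\nu}(E^c) = 0$; outer regularity of $\hat{\mu}$ provides an open $U \supseteq E$ with $\hat{\mu}(U) < \varepsilon$ (whence $\hat{\nu}(U^c) \le \hat{\nu}(E^c) = 0$), inner regularity of $\hat{\nu}$ yields a compact $K \subseteq U$ with $\hat{\nu}(K^c) < \varepsilon$, and covering $K$ by finitely many clopens contained in $U$ produces the required $\phi(D)$ with $\mu(D) < \varepsilon$ and $\nu(D^c) \le \hat{\nu}(K^c) < \varepsilon$. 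Finally, (4) uses the fact that in a totally disconnected compact space two disjoint closed sets are separated by a clopen: in the forward direction the witnessing $D$ yields the clopen $\phi(D)$ separating the two supports, and in the backward direction one takes such a separating clopen $C$ and sets $D = \phi^{-1}(F \setminus C)$. The main obstacle is really the bookkeeping in (1) and (3): matching the correct form of regularity (outer for $\hat{\mu}$, inner for $\hat{\nu}$) and then using the clopen basis to transfer a Borel-level witness back to a charge-level one in $\mathscr{F}$.
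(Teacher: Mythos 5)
Your proposal is correct and follows essentially the same route as the paper: transfer everything through $\phi$, use regularity of the extended measures for the substantive directions of (1) and (3), and invoke Theorem 2.1 to pass between the weak and strong notions on the measure-theoretic side. The only mechanical difference is that the paper approximates a Borel set by a clopen set in symmetric difference simultaneously for $\hat{\mu}$ and $\hat{\nu}$, whereas you combine outer regularity of $\hat{\mu}$ with inner regularity of $\hat{\nu}$ and finite clopen covers of compacta; both devices yield the same estimates, and your explicit arguments for (2) and (4), which the paper dismisses as obvious, are also sound.
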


\begin{prf}
(1) Let us assume that $\nu \ll \mu$. Given $\varepsilon > 0$, take $\delta > 0$ as above.
Let $E \in \mathcal{B}(F)$ with $\hat{\mu}(E) = 0$. Since $\hat{\mu}$ is regular, for any $\delta > \delta^{\prime} > 0$, one can choose $A \in \mathscr{F}$ such that $\hat{\mu}(\phi(A) \triangle E) < \delta^{\prime}$ and $\hat{\nu}(\phi(A) \triangle E) < \delta^{\prime}$. Thus we have $\mu(A) = \hat{\mu}(\phi(A)) \le \hat{\mu}(\phi(A) \triangle E) + \hat{\mu}(E) = \hat{\mu}(\phi(A) \triangle E) < \delta^{\prime}$. On the other hand, $\hat{\nu}(E) \le \hat{\nu}(\phi(A) \triangle E) + \hat{\nu}(\phi(A)) \le \delta^{\prime} + \varepsilon$. Since $\delta^{\prime}$ and $\varepsilon$ can be arbitrary small, we have $\hat{\nu}(E) = 0$. Thus $\hat{\nu} \prec \hat{\mu}$.

Conversely, suppose that $\hat{\nu} \prec \hat{\mu}$. Since $\mathcal{B}(F)$ is a $\sigma$-algebra and $\hat{\mu}$ and $\hat{\nu}$ are measures, $\hat{\nu} \prec \hat{\mu}$ if and only if $\hat{\nu} \ll \hat{\mu}$ by Theorem 2.1. Thus through $\phi^{-1}$ we have $\nu \ll \mu$.

(2) This is obvious.

(3) Let us assume that $\nu \perp \mu$. Let $\varepsilon_1 > 0$ and $D_{\varepsilon_1,i} \in \mathscr{F}$, $i \ge 1$ be such that $\mu(D_{\varepsilon_1, i}) < \frac{\varepsilon_1}{2^i}$ and $\nu(D_{\varepsilon_1, i}^c) < \frac{\varepsilon_1}{2^i}$. Notice that $\nu(D_{\varepsilon_1, i}) > 1-\frac{\varepsilon_1}{2^i}$ for every $i \ge 1$. Hence we have $\hat{\nu}(\cup_{i \ge 1} \phi(D_{\varepsilon_1, i})) = 1$. On the other hand, $\hat{\mu}(\cup_{i \ge 1} \phi(D_{\varepsilon_1, i})) \le \sum_{i=1}^{\infty} \frac{\varepsilon_1}{2^i} = \varepsilon_1$. Now we choose a decreasing sequence $\{\varepsilon_j\}_{j \ge 1}$ of positive numbers such that $\lim_{j \to \infty} \varepsilon_j = 0$. Then we put $E = \cap_{j \ge 1} \cup_{i \ge 1} \phi(D_{\varepsilon_j, i})$ and we get $\hat{\nu}(E^c) = 0$ and $\hat{\mu}(E) = 0$, which means that $\mu \ \rotatebox{90}{$\vDash$} \ \nu$.

Now suppose that $\mu \ \rotatebox{90}{$\vDash$} \ \nu$. Then there exists a set $D$ in $\mathcal{B}(F)$ such that $\hat{\mu}(D) = 0$ and $\hat{\nu}(D^c) = 0$. By the regularity of $\hat{\mu}$ and $\hat{\nu}$, there exists a set $C$ in $\mathscr{F}$ such that $\hat{\mu}(\phi(C) \triangle D) < \varepsilon$ and $\hat{\nu}(\phi(C) \triangle D) < \varepsilon$. Thus we have $\mu(C) = \hat{\mu}(\phi(C)) \le \hat{\mu}(\phi(C) \triangle D) + \hat{\mu}(D) < \varepsilon$ and $\nu(C^c) = \hat{\nu}(\phi(C^c)) \le \hat{\nu}(\phi(C^c) \triangle D^c) + \hat{\nu}(D^c) = \hat{\nu}(\phi(C) \triangle D) + \hat{\nu}(D^c) < \varepsilon$. This shows that $\nu \perp \mu$.

(4) This is obvious.
\end{prf}

Concerning these notions, we give a generalization of the Lebesgue decomposition theorem to charges (Refer to [2] for details).

\begin{thm}
For given charges $\mu$ and $\nu$ on $(X, \mathscr{F})$, there exists charges $\nu_1$ and $\nu_2$ on $(X, \mathscr{F})$ such that
\begin{enumerate}
\item $\nu = \nu_1 + \nu_2$.
\item $\nu_1 \ll \mu$.
\item $\nu_2 \perp \mu$.
\end{enumerate}
Furthermore, a decomposition of $\nu$ satisfying (2) and (3) is unique.
\end{thm}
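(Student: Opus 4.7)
The plan is to reduce the statement to the classical Lebesgue decomposition theorem for countably additive measures on the Stone space $F$, then transfer the decomposition back to $\mathscr{F}$ via the Boolean isomorphism $\phi$. First, I would consider the extended Borel measures $\hat{\mu}$ and $\hat{\nu}$ on $(F,\mathcal{B}(F))$; being finite and countably additive on a $\sigma$-algebra, they admit the usual Lebesgue decomposition $\hat{\nu}=\hat{\nu}_1+\hat{\nu}_2$ with $\hat{\nu}_1\ll\hat{\mu}$ and $\hat{\nu}_2\perp\hat{\mu}$, where by Theorem~2.1(2) the second condition coincides with $\hat{\nu}_2 \ \rotatebox{90}{$\vDash$} \ \hat{\mu}$.

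Next, I would define $\nu_i(A):=\hat{\nu}_i(\phi(A))$ for $A\in\mathscr{F}$ and $i=1,2$. Each $\nu_i$ is a nonnegative bounded charge by restriction, and $\nu=\nu_1+\nu_2$ follows on the clopen algebra $\mathscr{C}$ and hence on $\mathscr{F}$. In order to invoke Theorem~2.2 I also need the identification that the regular Borel extension of $\nu_i$ (as constructed at the start of Section~2) is precisely the measure $\hat{\nu}_i$ with which I started; this is where the main technical care is required, and it follows because the extension procedure produces the unique regular Borel measure agreeing with $\nu_i\circ\phi^{-1}$ on $\mathscr{C}$, while $\hat{\nu}_i$ already agrees with this data by construction. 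Granted this identification, Theorem~2.2(1) combined with Theorem~2.1(1) yields $\nu_1\ll\mu$, and Theorem~2.2(3) yields $\nu_2\perp\mu$.

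For uniqueness, if $\nu=\nu_1'+\nu_2'$ is another decomposition with $\nu_1'\ll\mu$ and $\nu_2'\perp\mu$, then passing once more to the Stone space and applying Theorem~2.2 in the opposite direction produces measures $\hat{\nu}_1',\hat{\nu}_2'$ satisfying $\hat{\nu}=\hat{\nu}_1'+\hat{\nu}_2'$, $\hat{\nu}_1'\ll\hat{\mu}$, and $\hat{\nu}_2'\perp\hat{\mu}$. Uniqueness in the classical Lebesgue decomposition then forces $\hat{\nu}_i'=\hat{\nu}_i$, hence $\nu_i'=\nu_i$ on $\mathscr{F}$. The principal obstacle in executing this plan is precisely the identification above of the canonical extension of $\nu_i$ with $\hat{\nu}_i$; all remaining steps are routine transfers across~$\phi$.
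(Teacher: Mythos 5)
The paper offers no proof of this theorem at all: it is quoted from the reference [2], where the decomposition is obtained by a direct construction on the field $\mathscr{F}$ (essentially one takes $\nu_2(A)=\inf_{\delta>0}\sup\{\nu(B): B\in\mathscr{F},\ B\subseteq A,\ \mu(B)\le\delta\}$ and verifies by hand that this is a charge with $\nu-\nu_2\ll\mu$ and $\nu_2\perp\mu$). Your route --- pushing everything to the Stone space, invoking the classical Lebesgue decomposition for the finite countably additive measures $\hat\mu,\hat\nu$ on $\mathcal{B}(F)$, and pulling back through $\phi$ --- is a genuinely different but sound strategy, entirely consistent with the machinery of Section 2 and Theorem 2.2. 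You have correctly isolated the one point needing real work, the identification of the canonical regular Borel extension of $\nu_i$ with the piece $\hat\nu_i$ produced by the classical theorem; to close it you should state two facts explicitly: (i) $\hat\nu_1$ and $\hat\nu_2$ are themselves \emph{regular}, which holds because they are the restrictions $\hat\nu(\cdot\cap N^c)$ and $\hat\nu(\cdot\cap N)$ of the regular measure $\hat\nu$ to the Borel set $N$ carrying the singular part, and restrictions of finite regular Borel measures to Borel sets remain regular (inner regularity passes to the restriction, and outer regularity then follows by complementation); and (ii) a finite regular Borel measure on $F$ is determined by its values on the clopen algebra $\mathscr{C}$, since every compact subset of an open set is covered by finitely many clopen subsets of it. With (i) and (ii) the identification holds, the transfer of $\ll$ and $\perp$ via Theorems 2.1 and 2.2 goes through as you describe, and the uniqueness argument works once you also note that the canonical extension is additive, so that $\hat\nu=\widehat{\nu_1'}+\widehat{\nu_2'}$. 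What the direct proof in [2] buys is independence from the Stone-space apparatus and from any regularity bookkeeping; what your proof buys is that existence and uniqueness both drop out of the classical theorem in one stroke.
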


%One can find the proof of this result and also many other assertions equivalent to the additive property in [ ].

\section{Equivalent conditions to additive property}
In this section, we introduce some equivalent assertions to the additive property. As we have mentioned in Section 1, one can generalize some of the main theorems in measure theory to charges having the additive property. In fact, conversely, the validity of these theorems are also sufficient conditions for charges to have the additive property. We begin with the completeness of $L^p$ spaces over charges, which is the original motive of introducing the notion of the additive property. See [1] for the proofs of the following results.

\begin{thm}
For a charge $\mu$ (not necessarily bounded) on $(X, \mathscr{F})$, $\mu$ has the additive property if and only if $L^p(\mu)$ is complete.
\end{thm}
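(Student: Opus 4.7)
The plan is to route the proof through the Stone space extension $(F,\mathcal{B}(F),\hat{\mu})$ introduced in Section 2. The correspondence $\chi_A\mapsto\chi_{\phi(A)}$ induces an isometric linear map $T:L^p(\mu)\to L^p(\hat{\mu})$ on $\mathscr{F}$-simple functions, extending by continuity. Since $L^p(\hat{\mu})$ is complete by classical measure theory and the $\mathscr{C}$-simple functions (which lie in the image of $T$) are dense in $L^p(\hat{\mu})$, completeness of $L^p(\mu)$ is equivalent to $T$ being surjective, i.e., to every $\hat{f}\in L^p(\hat{\mu})$ being representable by a genuine $\mathscr{F}$-measurable function on $X$. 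Both directions of the theorem then reduce to this realization question.

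For the forward direction ($\Rightarrow$), I would take a Cauchy sequence of $\mathscr{F}$-simple functions $\{s_n\}$ and pass to a subsequence with $\|s_{n+1}-s_n\|_p<2^{-n}$. Writing $g_n=\sum_{k\le n}|s_{k+1}-s_k|$, the increasing sets $A_{n,M}=\{x:g_n(x)>M\}$ lie in $\mathscr{F}$ and satisfy $\mu(A_{n,M})\le M^{-p}$ by Markov's inequality. The additive property supplies, for each $M$ and each $\varepsilon>0$, a set $B_{M,\varepsilon}\in\mathscr{F}$ containing every $A_{n,M}$ modulo $\mu$-null sets and with $\mu(B_{M,\varepsilon})\le M^{-p}+\varepsilon$. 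On the complement of $B_{M,\varepsilon}$ the partial sums $g_n$ are uniformly bounded by $M$, so the series $\sum_k(s_{k+1}-s_k)$ converges absolutely there, defining an $\mathscr{F}$-measurable pointwise limit $f$ on $X\setminus\bigcap_M B_{M,\varepsilon}$. A Fatou-type estimate, carried out in the Stone space where $\hat{\mu}$ is countably additive, then delivers $f\in L^p(\mu)$ and $\|s_n-f\|_p\to 0$.

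For the converse ($\Leftarrow$), given an increasing sequence $A_1\subseteq A_2\subseteq\cdots$ with bounded $\mu(A_n)$, the indicators $\chi_{A_n}$ form a Cauchy sequence in $L^p(\mu)$ since $\|\chi_{A_n}-\chi_{A_m}\|_p^p=\mu(A_n\setminus A_m)$ for $n>m$. By completeness this sequence has an $L^p(\mu)$-limit $f$, and thresholding $f$ at level $1/2$ produces a candidate set $B\in\mathscr{F}$; a small regularity adjustment at the Stone-space level then arranges $\mu(A_n\setminus B)=0$ for all $n$ and $\mu(B)\le\lim_n\mu(A_n)+\varepsilon$.

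The principal obstacle is in the forward direction: one must translate the countably many $\mu$-null conditions $\mu(A_{n,M}\setminus B_{M,\varepsilon})=0$, one per $n$, into honest pointwise control of the $s_n$ on a single large set. Because $\mu$ is only finitely additive on $\mathscr{F}$, a countable union of $\mu$-null sets need not be $\mu$-null, and this is exactly the obstruction the additive property is designed to defeat: it packages the entire countable family of exceptional sets $A_{n,M}$ into the single $\mathscr{F}$-set $B_{M,\varepsilon}$ with small $\mu$-measure. Once that absorption is in hand, the forward argument proceeds along the classical pattern of the $L^p$ completeness proof.
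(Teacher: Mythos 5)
First, note that the paper itself offers no proof of this statement: it is quoted from [1] (Basile--Rao) with the remark ``See [1] for the proofs,'' so the comparison here is against the known argument rather than a proof in the text. Your global framing --- embed $L^p(\mu)$ isometrically into the complete space $L^p(\hat{\mu})$ over the Stone space and reduce completeness to surjectivity of that embedding --- is sound. The genuine gap is in the forward direction, at exactly the point you flag and then claim to have dispatched. The additive property gives $\mu(A_{n,M}\setminus B_{M,\varepsilon})=0$ for each $n$ \emph{separately}; it does not give $A_{n,M}\subseteq B_{M,\varepsilon}$. Hence the assertion that ``on the complement of $B_{M,\varepsilon}$ the partial sums $g_n$ are uniformly bounded by $M$'' fails as written: off $B_{M,\varepsilon}$ you control $g_n$ only outside the exceptional set $\bigcup_n (A_{n,M}\setminus B_{M,\varepsilon})$, which is a countable union of $\mu$-null sets and therefore, in the finitely additive setting, need not be null --- indeed it need not even belong to $\mathscr{F}$, which is only a field. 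So the pointwise limit $f$ may fail to exist on a set of positive outer measure, and your closing paragraph, which says the additive property ``packages the entire countable family of exceptional sets into the single set $B_{M,\varepsilon}$,'' restates the obstruction rather than overcoming it. The standard repair abandons the pointwise limit: one defines the candidate $f$ piecewise, setting $f=s_{n_k}$ on the difference sets of a decreasing family of absorbing sets supplied by the additive property, and estimates $\|f-s_n\|_p$ directly by finite additivity on each piece; one must also verify that the resulting $f$ is $T_1$-measurable (a hazy limit of simple functions), since that, and not pointwise definition off a small set, is the membership criterion for $L^p(\mu)$ in charge theory.

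The converse also needs more than a ``small regularity adjustment'': the $L^p(\mu)$-limit $f$ of the $\chi_{A_n}$ is only a $T_1$-measurable function, so $\{f>1/2\}$ need not lie in $\mathscr{F}$, and replacing $f$ by a nearby simple function $s$ and setting $B=\{s>1/2\}$ yields only $\mu(A_n\setminus B)\le 2^p\|\chi_{A_n}-s\|_p^p$, which is small but not the required $0$. Obtaining $\mu(A_i\setminus B)=0$ exactly takes a further argument (for instance, exploiting that $(\chi_{A_m}-f)^+=0$ in $L^p(\mu)$ for every $m$ and choosing the approximating simple function and the threshold with care). Neither gap is fatal to the strategy, but both sit precisely where the content of the theorem lives, and as written the proposal does not close them.
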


The next result is a generalization of the Radon-Nikodym theorem to charges.

\begin{thm}
For a charge $\mu$ on $(X, \mathscr{F})$, $\mu$ has the additive property if and only if 
for every charge $\nu$ (not necessarily nonnegative) on $(X, \mathscr{F})$ with $\nu \ll \mu$, there exists some $f \in L^1(\mu)$ such that $\nu(A) = \int_A f d\mu$ holds for every $A \in \mathscr{F}$.
\end{thm}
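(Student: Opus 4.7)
My plan is to lift the question to the Stone space $(F, \mathcal{B}(F), \hat\mu)$, where classical Radon--Nikodym theory applies, and transfer the conclusion back via the natural isometric embedding $\iota : L^1(\mu) \hookrightarrow L^1(\hat\mu)$ that extends $\sum c_j 1_{A_j} \mapsto \sum c_j 1_{\phi(A_j)}$ by continuity. Since $\iota$ maps the dense set of $\mathscr{F}$-simple functions onto the dense set of $\mathscr{C}$-simple functions, its image is always dense in $L^1(\hat\mu)$; consequently, by Theorem 3.1, the additive property of $\mu$ is equivalent to the surjectivity of $\iota$. I will exploit this reformulation in both directions.

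Forward direction: assume the additive property and let $\nu \ll \mu$, which we may take to be nonnegative after a Jordan-type decomposition. By Theorem 2.2(1), $\hat\nu \ll \hat\mu$ in the classical sense, so the classical Radon--Nikodym theorem yields $g \in L^1(\hat\mu)$ with $\hat\nu(E) = \int_E g\,d\hat\mu$. The surjectivity of $\iota$ produces $f \in L^1(\mu)$ with $\hat f = g$ $\hat\mu$-almost everywhere, and for each $A \in \mathscr{F}$ we obtain $\int_A f\,d\mu = \int_{\phi(A)} \hat f\,d\hat\mu = \int_{\phi(A)} g\,d\hat\mu = \hat\nu(\phi(A)) = \nu(A)$, as required.

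Reverse direction: assume the Radon--Nikodym property and aim to prove that $\iota$ is surjective. Given an arbitrary nonnegative $g \in L^1(\hat\mu)$, define a set function $\tilde\nu$ on $\mathscr{F}$ by $\tilde\nu(A) = \int_{\phi(A)} g\,d\hat\mu$; finite additivity is immediate, and the classical absolute continuity of $E \mapsto \int_E g\,d\hat\mu$ with respect to $\hat\mu$ combined with the identity $\hat\mu(\phi(A)) = \mu(A)$ shows that $\tilde\nu \ll \mu$ in the sense of Definition 2.1(1). The hypothesis therefore provides $f \in L^1(\mu)$ with $\tilde\nu(A) = \int_A f\,d\mu$ for all $A \in \mathscr{F}$. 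The two Borel measures $E \mapsto \int_E \hat f\,d\hat\mu$ and $E \mapsto \int_E g\,d\hat\mu$ now agree on $\mathscr{C}$, hence on $\mathcal{B}(F)$ by the uniqueness of Borel extensions of a charge on $\mathscr{C}$ used in Section 2; it follows that $\hat f = g$ $\hat\mu$-almost everywhere, so $g$ lies in the image of $\iota$. Thus $\iota$ is surjective, whence $L^1(\mu)$ is complete (being the isometric image of the complete space $L^1(\hat\mu)$), and Theorem 3.1 delivers the additive property. The main delicate step I anticipate is the verification that $\tilde\nu \ll \mu$ in the $\varepsilon$--$\delta$ sense of Definition 2.1(1); this requires only the $\hat\mu$-absolute continuity of $E \mapsto \int_E g\,d\hat\mu$ together with the Stone-space identity $\hat\mu(\phi(A)) = \mu(A)$, but it is the one place where care is needed to move between the charge-theoretic and measure-theoretic notions.
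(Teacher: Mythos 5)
Your argument is essentially correct. Note first that the paper itself offers no proof of this theorem: it is quoted from Basile and Bhaskara Rao (reference [1]), with the reader referred there, so there is no in-paper proof to match against; judged on its own, your Stone-space transfer is a legitimate route and is very much in the spirit of the paper's methodology (the extension $(F,\mathcal{B}(F),\hat{\mu})$ of Section 2 combined with the completeness criterion of Theorem 3.1). The skeleton is sound: $\iota$ is isometric on $\mathscr{F}$-simple functions because $\hat{\mu}(\phi(A))=\mu(A)$, its image is dense in $L^1(\hat{\mu})$ by regularity of $\hat{\mu}$ (every Borel set is approximated in measure by a clopen set), and since a dense, isometric, complete subspace of a Banach space is the whole space, Theorem 3.1 indeed turns the additive property into surjectivity of $\iota$; both directions then reduce to the classical Radon--Nikodym theorem on $(F,\mathcal{B}(F),\hat{\mu})$. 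Three details deserve to be written out rather than waved at. First, in the forward direction the reduction to nonnegative $\nu$ requires that $\nu\ll\mu$ (in the $\varepsilon$--$\delta$ sense of Definition 2.1) implies $|\nu|\ll\mu$ for a bounded charge $\nu$; this is true but needs the standard variation estimate, and you also need Theorem 2.1(1) to pass from $\hat{\nu}\prec\hat{\mu}$ (which is what Theorem 2.2(1) literally gives) to classical absolute continuity. Second, the identity $\int_A f\,d\mu=\int_{\phi(A)}\iota f\,d\hat{\mu}$ for general $f\in L^1(\mu)$ should be justified by passing to the limit from simple functions, since the charge-theoretic integral is defined exactly as such a limit. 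Third, your appeal to ``uniqueness of Borel extensions'' to conclude $\hat{f}=g$ $\hat{\mu}$-a.e.\ is the right idea but the cited uniqueness is for nonnegative measures, whereas $E\mapsto\int_E\iota f\,d\hat{\mu}$ may be signed; one should instead run a $\pi$--$\lambda$ argument on the algebra $\mathscr{C}$ and use that both $\iota f$ and $g$ are $\hat{\mu}$-a.e.\ equal to $\sigma(\mathscr{C})$-measurable functions (both being $L^1$-limits of $\mathscr{C}$-simple functions). None of these is a gap in the idea, only in the execution.
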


The Hahn decomposition theorem can be generalized to charges as follows.

\begin{thm}
For a charge $\mu$ on $(X, \mathscr{F})$ where $\mathscr{F}$ is a $\sigma$-algebra, $\mu$ has the additive property if and only if for every charge $\nu$ (not necessarily nonnegative) on $(X, \mathscr{F})$ with $\nu \ll \mu$, there exists some $A \in \mathscr{F}$ satisfying the following property; for each $B \in \mathscr{F}$ with $B \subseteq A$, $\nu(B) \ge 0$ holds, and each $B \in \mathscr{F}$ with $B \subseteq A^c$, $\nu(B) \le 0$ holds.
\end{thm}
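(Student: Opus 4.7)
The plan is to use Theorem 3.2 to reduce the equivalence to showing that the stated Hahn-type decomposition property is itself equivalent to the Radon--Nikodym property. For the forward direction ($\Rightarrow$), I would start with the additive property and use Theorem 3.2 to obtain, for any signed $\nu \ll \mu$, an $f \in L^1(\mu)$ with $\nu(E) = \int_E f\,d\mu$. Since $\mathscr{F}$ is a $\sigma$-algebra, $A := \{x : f(x) \ge 0\}$ lies in $\mathscr{F}$; the required sign properties of $\nu$ on $A$ and $A^c$ are then immediate from $f \ge 0$ on $A$ and $f < 0$ on $A^c$.

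For the reverse direction ($\Leftarrow$), the plan is to assume the Hahn decomposition property, construct the Radon--Nikodym derivative directly, and then appeal to Theorem 3.2. First, applying the hypothesis to $\nu$ itself provides a Jordan-type decomposition $\nu = \nu^+ - \nu^-$ with $\nu^\pm \ge 0$ and $\nu^\pm \ll \mu$, so it suffices to treat the case $\nu \ge 0$. For each $r \in \mathbb{Q}_{\ge 0}$, the charge $\nu - r\mu$ is still absolutely continuous with respect to $\mu$, and the hypothesis furnishes $A_r \in \mathscr{F}$ on which $\nu - r\mu \ge 0$ and off which $\nu - r\mu \le 0$. A short computation on $A_s \setminus A_r$ for $s > r$ gives the nestedness $\mu(A_s \setminus A_r) = 0$, so that $f(x) := \sup\{r \in \mathbb{Q}_{\ge 0} : x \in A_r\}$ is well defined up to $\mu$-null sets and $\mathscr{F}$-measurable, and on each dyadic level set $E_k^n := \{k/n \le f < (k+1)/n\}$ the Hahn inequalities yield
\[
\tfrac{k}{n}\,\mu(E_k^n) \;\le\; \nu(E_k^n) \;\le\; \tfrac{k+1}{n}\,\mu(E_k^n).
\]

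To finish, for any $E \in \mathscr{F}$ I would decompose $E$ as the finite disjoint union of $E \cap E_k^n$ for $0 \le k \le Kn$ together with the tail $E \cap \{f \ge K\}$. The finite sum approximates both $\nu(E)$ and $\int_E f\,d\mu$ to within $\mu(X)/n$ by the layer estimate, while the tail is controlled by $\mu(\{f \ge K\}) \le \nu(X)/K \to 0$ (from the Hahn inequality applied to $\nu - K\mu$) combined with $\nu \ll \mu$. Letting $K \to \infty$ and then $n \to \infty$ yields $\nu(E) = \int_E f\,d\mu$ with $f \in L^1(\mu)$, since $\nu$ is bounded. The main obstacle is precisely this final synthesis: because $\mu$ is only finitely additive, one cannot split $E$ into countably many level sets in one stroke and invoke $\sigma$-additivity, and the truncation-plus-tail estimate above is the essential workaround. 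The $\sigma$-algebra hypothesis on $\mathscr{F}$ enters only to guarantee measurability of $f$ and of the sets $E_k^n$, not countable additivity of $\mu$.
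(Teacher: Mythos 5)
The paper does not actually prove this theorem; it is one of the results quoted from [1] (``See [1] for the proofs''), so your argument can only be judged on its own terms. Your forward direction is essentially fine, modulo one point you should address: an element of $L^1(\mu)$ over a charge is a hazy limit of simple functions rather than a pointwise-defined measurable function, so you need a word on why the derivative supplied by Theorem 3.2 can be replaced by a genuinely $\mathscr{F}$-measurable representative before $A=\{f\ge 0\}$ can be asserted to lie in $\mathscr{F}$.

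The reverse direction has a genuine gap at the layer estimate. Your nestedness computation shows that $\mu(B)=0$ (and $\nu(B)=0$) for every $B\subseteq A_s\setminus A_r$ with $s>r$, i.e.\ the family $\{A_r\}$ is decreasing only modulo hereditarily null sets. Consequently $\{f\ge k/n\}$ is contained in $A_r$ (for $r<k/n$) only up to the exceptional set $\bigcup_{s>r}(A_s\setminus A_r)$, which is a \emph{countable} union of $\mu$-null sets --- and for a merely finitely additive $\mu$ such a union need not be null (compare singletons versus all of $\mathbb{N}$ under a density measure). Hence the lower bound $\tfrac{k}{n}\mu(E_k^n)\le\nu(E_k^n)$ does not follow from the Hahn inequalities as claimed; only the upper bound does, since $\{f<(k+1)/n\}$ is literally disjoint from $A_{(k+1)/n}$. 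You correctly flag the failure of countable additivity in the final summation, but the same failure already poisons the passage from the sets $A_r$ to the function $f$ and its level sets. The standard repair is to avoid $f$ at this stage: for fixed $n$ and $K$ replace $A_{j/n}$ by the literally nested sets $C_j=\bigcap_{i\le j}A_{i/n}$ (a \emph{finite} intersection, so $C_j\triangle A_{j/n}$ is genuinely null by finite additivity), partition $X$ into the finitely many pieces $D_j=C_j\setminus C_{j+1}$ together with the tail $C_{Kn}$, prove the two-sided estimate on each $D_j$, and show that the resulting simple functions form a Cauchy sequence in $L^1(\mu)$ whose limit is the desired derivative. With that modification your outline goes through.
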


Finally, we give the following formulation of the additive property using extended measure spaces, which plays an important role in proving the main theorem in the following section. 
%The additive property of $\mu$ can be characterized by the extended measure $\hat{\mu}$. 
As we have seen in Section 2, we extend a charge space $(X, \mathscr{F}, \mu)$ to a measure space $(F, \mathcal{B}(F), \hat{\mu})$. 

\begin{thm}
A charge $\mu$ on $(X, \mathscr{F})$ has the additive property if and only if $\hat{\mu}(U) = \hat{\mu}(\overline{U})$ for every open sets $U$ of supp $\mu$, where $\overline{U}$ denotes the closure of $U$ in supp $\mu$.
\end{thm}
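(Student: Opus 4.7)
The plan is to transfer between clopen subsets of $F$ (in bijection with $\mathscr{F}$ via $\phi$) and general Borel subsets, leaning on two elementary facts: (a) any clopen $\hat\mu$-null subset of $F$ is disjoint from $\operatorname{supp}\mu$, because its complement is a clopen set of full $\hat\mu$-measure and $\operatorname{supp}\mu$ is contained in every closed set of full measure; and (b) every closed subset of $F$ is compact and therefore, by outer regularity together with the clopen base of $F$, can be enclosed in a clopen set of arbitrarily close $\hat\mu$-measure.

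\textbf{($\Leftarrow$)} Assume the closure condition and fix an increasing sequence $(A_n)$ in $\mathscr{F}$ and $\varepsilon>0$. Set $V=\bigcup_n\phi(A_n)$ and $U=V\cap\operatorname{supp}\mu$, open in $\operatorname{supp}\mu$. Then $\hat\mu(V)=\hat\mu(U)=\lim\mu(A_n)$, since $V\setminus\operatorname{supp}\mu$ is open and $\hat\mu$-null, and by hypothesis $\hat\mu(\overline U)=\lim\mu(A_n)$. Apply (b) to get a clopen $C\supseteq\overline U$ with $\hat\mu(C)<\lim\mu(A_n)+\varepsilon$, and put $B=\phi^{-1}(C)$. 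The clopen set $\phi(A_n)\setminus C$ is disjoint from $\operatorname{supp}\mu$ (because $\phi(A_n)\cap\operatorname{supp}\mu\subseteq U\subseteq C$), hence $\hat\mu$-null, so $\mu(A_n\setminus B)=0$ and $B$ witnesses the additive property.

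\textbf{($\Rightarrow$)} Assume the additive property, fix an open set $U$ of $\operatorname{supp}\mu$, and consider
\[\mathcal A=\{A\in\mathscr{F}:\phi(A)\cap\operatorname{supp}\mu\subseteq U\},\]
which is closed under finite unions. A compactness argument on the clopen base of $F$ shows that every compact $K\subseteq U$ is contained in $\phi(A)\cap\operatorname{supp}\mu$ for some $A\in\mathcal A$, so inner regularity yields $\sup_{A\in\mathcal A}\mu(A)=\hat\mu(U)$. Choose an increasing $(A'_n)\subseteq\mathcal A$ with $\mu(A'_n)\to\hat\mu(U)$ and apply the additive property to obtain $B\in\mathscr F$ with $\mu(B)\le\hat\mu(U)+\varepsilon$ and $\mu(A'_n\setminus B)=0$ for every $n$.

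The crucial step is to upgrade the latter to $\mu(A\setminus B)=0$ for \emph{every} $A\in\mathcal A$, not just for the sequence members. This is forced by the optimality of $(A'_n)$: since $A\cup A'_n\in\mathcal A$ one has $\mu(A\cup A'_n)\le\hat\mu(U)$, and combining this with $\mu(A'_n)\to\hat\mu(U)$ gives $\mu(A\setminus A'_n)\to 0$; then $A\setminus B\subseteq(A\setminus A'_n)\cup(A'_n\setminus B)$ yields the claim. By (a), $\mu(A\setminus B)=0$ translates to $\phi(A)\cap\operatorname{supp}\mu\subseteq\phi(B)$. Since the sets $\phi(A)\cap\operatorname{supp}\mu$ with $A\in\mathcal A$ cover $U$, we deduce $U\subseteq\phi(B)\cap\operatorname{supp}\mu$; the right-hand side is closed in $\operatorname{supp}\mu$, hence contains $\overline U$, so $\hat\mu(\overline U)\le\mu(B)\le\hat\mu(U)+\varepsilon$, and letting $\varepsilon\to 0$ closes the argument. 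The main obstacle is precisely this passage from the single increasing sequence produced by the additive property to the possibly uncountable directed family $\mathcal A$ that generates $U$.
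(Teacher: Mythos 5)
Your proof is correct: both directions check out, including the crucial upgrading step in the necessity direction, where closure of $\mathcal{A}$ under finite unions together with the optimality of $\mu(A'_n)\to\hat{\mu}(U)$ forces $\mu(A\setminus B)=0$ for every $A\in\mathcal{A}$, not just the sequence members. The paper itself gives no proof of this theorem (it is quoted from Basile and Rao [1]), but your route through the Stone space --- using outer regularity plus compactness to enclose closed sets in clopen sets, and the definition of the support to kill clopen null sets --- is the natural argument within the framework set up in Section 2. The only point left implicit is that $\operatorname{supp}\mu$ has full $\hat{\mu}$-measure (used when you identify $\hat{\mu}(V)$ with $\hat{\mu}(U)$ and $\mu(A)$ with $\hat{\mu}(\phi(A)\cap\operatorname{supp}\mu)$); this follows from inner regularity of $\hat{\mu}$ on the compact space $F$ and deserves a sentence.
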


\section{Main result}
 In this section, we consider a necessary and sufficient condition that charges which are expressed by sums of charges have the additive property. Generally, as shown in [1], if charges $\mu$ and $\nu$ on $(X, \mathscr{F})$ have the additive property, the sum $\mu + \nu$ need not have the additive property.
First we have the following result.

\begin{thm}
Let $\mu$, $\nu$ be charges on $(X, \mathscr{F})$ such that $\nu \ll \mu$. If $\mu$ has the additive property, then $\nu$ has the additive property. 
\end{thm}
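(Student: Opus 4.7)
The plan is to establish the additive property for $\nu$ directly from the definitions, applying the additive property of $\mu$ to the \emph{same} increasing sequence but with a tolerance small enough to be absorbed by the absolute-continuity modulus of $\nu$.

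Fix an increasing sequence $A_1 \subseteq A_2 \subseteq \cdots$ in $\mathscr{F}$ and $\varepsilon > 0$. First, using $\nu \ll \mu$, I would choose $\delta > 0$ such that $\mu(A) < \delta$ implies $\nu(A) < \varepsilon$ for every $A \in \mathscr{F}$. Next, applying the additive property of $\mu$ with tolerance $\delta/2$ to the given sequence, I obtain $B \in \mathscr{F}$ satisfying $\mu(B) \le \lim_i \mu(A_i) + \delta/2$ and $\mu(A_i \setminus B) = 0$ for every $i$. I claim that this same $B$ witnesses the additive property for $\nu$.

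Condition (2), $\nu(A_i \setminus B) = 0$, follows immediately because $\nu \ll \mu$ implies $\nu \prec \mu$. For condition (1), since $\mu(A_i \cap B) = \mu(A_i)$, we have
\[
\mu(B \setminus A_i) = \mu(B) - \mu(A_i) \le \bigl(\lim_j \mu(A_j) - \mu(A_i)\bigr) + \delta/2,
\]
whose right side tends to $\delta/2 < \delta$ as $i \to \infty$. Hence $\mu(B \setminus A_i) < \delta$ for all sufficiently large $i$, and therefore $\nu(B \setminus A_i) < \varepsilon$ by the choice of $\delta$. Since $\nu(A_i \setminus B) = 0$ forces $\nu(B \setminus A_i) = \nu(B) - \nu(A_i)$, letting $i \to \infty$ along the decreasing sequence $\nu(B \setminus A_i)$ yields $\nu(B) - \lim_i \nu(A_i) \le \varepsilon$, which is precisely condition (1) for $\nu$.

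The main obstacle is minor but real: the absolute-continuity clause provides a \emph{strict} threshold $\mu(A) < \delta$, so one must ensure that $\mu(B \setminus A_i) < \delta$ actually holds for some $i$. This is exactly why I apply the additive property of $\mu$ with tolerance $\delta/2$ rather than $\delta$; beyond this small bookkeeping point, the argument is a short chain of monotone estimates for charges.
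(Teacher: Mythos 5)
Your argument is correct. The paper itself states this theorem without proof (it is quoted from the Basile--Rao framework of reference [1]), so there is no in-text argument to compare against; your direct verification from the definition is a valid, self-contained substitute. The key points all check out: reusing the same $B$ produced by the additive property of $\mu$, getting condition (2) for $\nu$ from $\nu \prec \mu$, and the identity $\mu(B \setminus A_i) = \mu(B) - \mu(A_i)$ (which relies on $\mu(A_i \setminus B) = 0$, so that $\mu(A_i \cap B) = \mu(A_i)$). Your observation about the tolerance is also genuinely necessary: invoking the additive property of $\mu$ with tolerance $\delta$ would only yield $\mu(B \setminus A_i) \le \delta + o(1)$, which never certifies the strict inequality $\mu(B \setminus A_i) < \delta$ that the absolute-continuity modulus demands, whereas $\delta/2$ does. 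An alternative, less elementary route would pass through Theorem 3.4 and Theorem 2.2: since $\hat{\nu} \ll \hat{\mu}$, any set of the form $\overline{U} \setminus U$ that is $\hat{\mu}$-null is automatically $\hat{\nu}$-null, giving the closure condition for $\nu$ on its support; your approach avoids the Stone-space machinery entirely and works directly at the level of the charge space.
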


From this result together with the Lebesgue decomposition theorem, it is sufficient to consider the condition for pairs of charges $\mu, \nu$ which are mutually singular. It is given by the following, in which a slightly general result of the additive property of finite sums of charges are treated. 
%In what follows, we consider charges defined on a $\sigma$-algebra of subsets of a set.

\begin{thm}
Let $\mu_1, \mu_2, \ldots, \mu_n$ be mutually singular charges one another on $(X, \mathscr{F})$ where $\mathscr{F}$ is a $\sigma$-algebra. Then $\mu_1 + \mu_2 + \ldots + \mu_n$ has the additive property if and only if every $\mu_i$, $1 \le i \le n$, has the additive property and they are mutually strongly singular.
\end{thm}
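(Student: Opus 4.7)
The plan is to use the topological characterization in Theorem 3.4: $\mu$ has the additive property iff $\hat{\mu}(\overline{U}) = \hat{\mu}(U)$ for every open $U \subseteq \mathrm{supp}\,\mu$. Since the Borel extension of a charge on $F$ is uniquely determined by its values on the clopen algebra, one has $\widehat{\mu_1 + \cdots + \mu_n} = \hat{\mu}_1 + \cdots + \hat{\mu}_n$ on $(F, \mathcal{B}(F))$, and in particular $\mathrm{supp}(\mu_1 + \cdots + \mu_n) = \bigcup_i \mathrm{supp}\,\mu_i$.

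For sufficiency, assume each $\mu_i$ has the additive property and the family is mutually strongly singular. Theorem 2.2(4) gives $K_i \cap K_j = \emptyset$ for $i \neq j$, where $K_i := \mathrm{supp}\,\mu_i$. Hence $K := \mathrm{supp}\,\mu = \bigsqcup_i K_i$ is a finite disjoint union of closed sets. For any open $U \subseteq K$, write $U = \bigsqcup_i(U \cap K_i)$ with each $U \cap K_i$ open in $K_i$; since the $K_i$ are closed and pairwise disjoint, $\overline{U}^K = \bigsqcup_i \overline{U \cap K_i}^{K_i}$. Theorem 3.4 applied to each $\mu_i$ gives $\hat{\mu}_i(\overline{U \cap K_i}^{K_i}) = \hat{\mu}_i(U \cap K_i)$, and since $\hat{\mu}_j$ vanishes off $K_j$, summing yields $\hat{\mu}(\overline{U}^K) = \sum_i \hat{\mu}_i(\overline{U \cap K_i}^{K_i}) = \sum_i \hat{\mu}_i(U \cap K_i) = \hat{\mu}(U)$. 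Theorem 3.4 then confirms the additive property of $\mu_1 + \cdots + \mu_n$.

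For necessity, assume $\mu := \mu_1 + \cdots + \mu_n$ has the additive property. Since $\mu_i \le \mu$ implies $\mu_i \ll \mu$, Theorem 4.1 transfers the additive property to each $\mu_i$. To prove mutual strong singularity, fix $i \neq j$. The pair sum also satisfies $\mu_i + \mu_j \le \mu$, so $\mu_i + \mu_j \ll \mu$ and inherits the additive property. The signed charge $\eta := \mu_i - \mu_j$ obeys $|\eta| \le \mu_i + \mu_j$, hence $\eta \ll \mu_i + \mu_j$, and Theorem 3.3 (applicable since $\mathscr{F}$ is a $\sigma$-algebra) produces $A \in \mathscr{F}$ with $\eta(B) \ge 0$ for every measurable $B \subseteq A$ and $\eta(B) \le 0$ for every measurable $B \subseteq A^c$.

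The last step combines this Hahn-type set with the singularity $\mu_i \perp \mu_j$. For any $\varepsilon > 0$ pick $D \in \mathscr{F}$ with $\mu_i(D) < \varepsilon$ and $\mu_j(D^c) < \varepsilon$. Using $\mu_i \le \mu_j$ on every measurable subset of $A^c$, one estimates
\[
\mu_i(A^c) \le \mu_i(D) + \mu_i(A^c \cap D^c) \le \varepsilon + \mu_j(A^c \cap D^c) < 2\varepsilon,
\]
and symmetrically $\mu_j(A) < 2\varepsilon$. Sending $\varepsilon \to 0$ gives $\mu_i(A^c) = 0$ and $\mu_j(A) = 0$, which is $\mu_i \ \rotatebox{90}{$\vDash$} \ \mu_j$. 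The main obstacle I expect is this final estimate: the Hahn-type decomposition alone provides only a signed inequality between $\mu_i$ and $\mu_j$, and one must pair it with the approximate singularity to remove the $\varepsilon$ and deduce exact strong singularity.
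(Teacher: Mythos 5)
Your proof is correct. Note that the paper itself does not prove this theorem; it is quoted from Basile--Rao [1], so the only internal point of comparison is the proof of Theorem 4.3, the countable analogue. Your sufficiency argument is essentially the same as the paper's sufficiency argument there: disjointness of the supports $K_i$ (via Theorem 2.2(4)), the identity $\overline{U}^K = \bigsqcup_i \overline{U \cap K_i}^{K_i}$ for a finite union of disjoint closed sets, and Theorem 3.4 applied to each summand; in the finite case the extra hypothesis $\bigl(\limsup_i S_i\bigr) \cap \bigcup_i S_i = \emptyset$ of Theorem 4.3 is vacuous, which is exactly why your computation closes without it. Your necessity argument is genuinely different from anything in the paper (the paper's necessity for Theorem 4.3 simply invokes Theorem 4.2 as a black box): you transfer the additive property to $\mu_i$, $\mu_j$ and $\mu_i+\mu_j$ via Theorem 4.1, apply the generalized Hahn decomposition (Theorem 3.3, legitimately, since $\mathscr{F}$ is a $\sigma$-algebra and $\mu_i-\mu_j \ll \mu_i+\mu_j$) to get a set $A$ with $\mu_j \le \mu_i$ on subsets of $A$ and $\mu_i \le \mu_j$ on subsets of $A^c$, and then splice in the $\varepsilon$-sets witnessing $\mu_i \perp \mu_j$ to force $\mu_i(A^c)=\mu_j(A)=0$. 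That last estimate is the real content and it checks out: $\mu_i(A^c) \le \mu_i(D) + \mu_j(A^c \cap D^c) < 2\varepsilon$ with $A$ fixed and $\varepsilon$ arbitrary. What your route buys is a self-contained proof using only results stated in the paper, at the cost of leaning on the $\sigma$-algebra hypothesis through Theorem 3.3; the reference [1] instead works with the weaker notion of separability to cover general fields of sets.
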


Note that a more general result was proved in [1], namely, the case in which $\mathscr{F}$ is not necessarily a $\sigma$-algebra. 
It is obtained by replacing in the above assertion strong singularity with {\it separability}, in between singularity and strong singularity. The notion of separability is rather complicated than strong singularity and thus we will confine ourselves to the case of $\sigma$-algebra.

Now we consider an extension of Theorem 4.2 to the case of countable sums of charges, which is one of the main results of this paper.

\begin{thm}
Let $\{\mu_i\}_{i \ge1}$ be a countable family of charges on $(X, \mathscr{F})$ where $\mathscr{F}$ is a $\sigma$-algebra such that they are mutually singular and $\mu = \sum_{i \ge 1} \mu_i$ exists. Let $S_i$ be the support of $\mu_i$ and $S$ be the support of $\mu$. Then $\mu$ has the additive property if and only if each $\mu_i$ has the additive property and they are mutually strongly singular and
\[\left(\limsup_i S_i\right) \ \bigcap \ \bigcup_{i \ge 1} S_i = \emptyset\]
holds, where $\limsup_i S_i = \cap_{i \ge 1} \overline{\cup_{j \ge i} S_j}$.
\end{thm}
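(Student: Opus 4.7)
My plan is to prove both directions through the topological criterion of Theorem 3.4, combined with Theorems 4.1 and 4.2 applied to suitable partial sums. Two preparatory observations on the extended measure space: since $\hat{\mu}$ and $\sum_i\hat{\mu}_i$ are regular Borel measures on $F$ that agree on the clopen algebra, one has $\hat{\mu}=\sum_i\hat{\mu}_i$; and the support of $\hat{\mu}$ satisfies $S=\overline{\bigcup_iS_i}$. Throughout I write $\overline{U}^S$ for closure in the subspace $S$.

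For necessity, since $\tau_n:=\mu_1+\cdots+\mu_n\leq\mu$ we have $\tau_n\ll\mu$, so by Theorem 4.1, $\tau_n$ has the additive property. Applying Theorem 4.2 to the mutually singular tuple $(\mu_1,\dots,\mu_n)$ then yields that each $\mu_i$ has the additive property and that the $\mu_i$ are pairwise strongly singular. For the limsup condition, let $\nu_n=\sum_{j>n}\mu_j$, so that $\mu=\tau_n+\nu_n$. The Borel set $E=F\setminus\bigcup_{i\leq n}S_i$ witnesses $\hat{\tau}_n\,\rotatebox{90}{$\vDash$}\,\hat{\nu}_n$, hence $\tau_n\perp\nu_n$ by Theorem 2.2(3). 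Applying Theorem 4.2 to the singular pair $(\tau_n,\nu_n)$ forces $\tau_n\,\rotatebox{90}{$\vDash$}\,\nu_n$; by Theorem 2.2(4), the supports $\bigcup_{i\leq n}S_i$ and $\overline{\bigcup_{j>n}S_j}$ are disjoint for every $n$. If some $y\in S_{i_0}\cap\limsup_iS_i$ existed, choosing $n=i_0$ would contradict $y\in\overline{\bigcup_{j>i_0}S_j}$, which is forced by $y\in\limsup_iS_i$.

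For sufficiency, Theorem 3.4 reduces the problem to showing $\hat{\mu}(U)=\hat{\mu}(\overline{U}^S)$ for every open $U\subseteq S$. Since each $\hat{\mu}_i$ is concentrated on $S_i$, both sides decompose as
\[\hat{\mu}(U)=\sum_{i}\hat{\mu}_i(U\cap S_i),\qquad \hat{\mu}(\overline{U}^S)=\sum_{i}\hat{\mu}_i(\overline{U}^S\cap S_i),\]
and the additive property of each $\mu_i$, applied via Theorem 3.4 inside $S_i$, gives $\hat{\mu}_i(U\cap S_i)=\hat{\mu}_i(\overline{U\cap S_i}^{S_i})$. The remaining task, and the main obstacle, is the set identity
\[\overline{U}^S\cap S_i=\overline{U\cap S_i}^{S_i}\qquad(i\geq 1),\]
in which only $\supseteq$ is automatic. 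For the reverse, I would take $y\in\overline{U}^S\cap S_i$ and any open neighborhood $W$ of $y$ in $F$: $W\cap U$ is a non-empty open subset of $S$, and density of $\bigcup_kS_k$ in $S$ forces $W\cap U$ to meet some $S_k$. If one supposes $k\neq i$ for every choice of $W$, then separating $y$ from each finite closed set $\bigcup_{k<N,\,k\neq i}S_k$ (disjoint from $y\in S_i$ by pairwise strong singularity) yields $y\in\overline{\bigcup_{k\geq N}S_k}$ for every $N$, i.e.\ $y\in\limsup_kS_k\cap\bigcup_kS_k$, contradicting the hypothesis. Hence $W\cap U\cap S_i\neq\emptyset$, giving $y\in\overline{U\cap S_i}^{S_i}$ as required.
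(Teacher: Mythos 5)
Your proof is correct, and it follows the same overall route as the paper's: sufficiency via the topological criterion of Theorem 3.4 together with the decomposition of $\hat{\mu}$ over the supports $S_i$, and necessity via Theorem 4.2 applied to a two-piece splitting of $\mu$. The local choices differ in two places, though. For necessity the paper splits $\mu = \mu_n + \sum_{i \neq n}\mu_i$ and extracts everything (additive property of each $\mu_n$, pairwise strong singularity, and the $\limsup$ condition) from the single computation $\operatorname{supp}\sum_{i\neq n}\mu_i = \bigcup_{i\neq n}S_i \cup \limsup_i S_i$; your head-plus-tail splitting $\mu=\tau_n+\nu_n$ costs an extra appeal to Theorem 4.1 and a second application of Theorem 4.2 (to the finite tuple), but in exchange you never need to identify the support of an infinite ``all but one'' sum --- the disjointness $\bigcup_{i\le n}S_i\cap\overline{\bigcup_{j>n}S_j}=\emptyset$ you obtain is exactly what kills a point of $S_{i_0}\cap\limsup_i S_i$. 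For sufficiency the paper writes $\overline{A\cap S}$ as $\bigcup_i\overline{A\cap S_i}\cup\overline{A\cap\limsup_i S_i}$ and disposes of the second piece using $\hat{\mu}(\limsup_i S_i)=0$, which quietly absorbs the extra limit points of the infinite union into $\limsup_i S_i$; your set identity $\overline{U}^S\cap S_i=\overline{U\cap S_i}^{S_i}$, proved by separating $y\in S_i$ from the finite closed sets $\bigcup_{k<N,\,k\neq i}S_k$, makes that step explicit and shows precisely where the hypothesis $\bigl(\limsup_i S_i\bigr)\cap\bigcup_i S_i=\emptyset$ is used; it is arguably the cleaner formulation of the same argument.
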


\begin{prf}
(Sufficiency) We prove the condition in Theorem 3.4. By the definition of $\mu$, it holds that $\hat{\mu} = \sum_{i \ge 1} \hat{\mu}_i$ and thus $\hat{\mu}$ is on $\cup_{i \ge 1} S_i$. Also since $\hat{\mu}_i$ are mutually strongly singular, $\hat{\mu}(A) = \hat{\mu}(A \cap \cup_{i \ge 1} S_i) = \sum_{i \ge 1} \hat{\mu}(A \cap S_i)$ holds. 

In what follows, for any Borel set $X$ of $F$ we denote the closure of $B \subseteq X$ in $X$ by $\overline{B}^X$. In the case of $X = F$, we omit the superscript. Note that the closure of $B \subseteq X$ in X is $X \cap \overline{B}$.
Take any $A \in \mathcal{B}(F)$. By the assumption, note that $S = \cup_{i \ge 1} S_i \cap \limsup_i S_i$ and $\limsup_i S_i \cap \cup_{i \ge 1} S_i = \emptyset$. Since each $\mu_i$ has the additive property, from Theorem 3.4 and the fact that charges $\mu_i$ are mutually strongly singular, we have $\hat{\mu}(\overline{A \cap S_i}) = \hat{\mu}_i(\overline{A \cap S_i}) = \hat{\mu}_i(\overline{A \cap S_i}^{S_i}) = \hat{\mu}_i(A \cap S_i) = \hat{\mu}(A \cap S_i)$.

On the other hand, it holds that $\overline{A \cap S} = \overline{A \cap (\cup_{i \ge1} S_i \cup \limsup_i S_i)} = \overline{\cup_{i \ge 1} (A \cap S_i)} \cup \overline{A \cap \limsup_i S_i} = \cup_{i \ge 1} \overline{A \cap S_i} \cup \overline{A \cap \limsup_i S_i}$. Together with the fact that $\hat{\mu}(\limsup_i S_i) = 0$, we have
\begin{align}
\hat{\mu}(\overline{A \cap S}^S) &= \hat{\mu}(\overline{A \cap S}) = \hat{\mu}(\cup_{i \ge 1} \overline{A \cap S_i}) + \hat{\mu}(\overline{A \cap \limsup_i S_i}) \notag \\
&= \sum_{i \ge 1} \hat{\mu}(\overline{A \cap S_i}) = \sum_{i \ge 1} \hat{\mu}(A \cap S_i) = \hat{\mu}({A \cap S}). \notag 
\end{align}
Hence by Theorem 3.4 we see that $\mu$ has the additive property. 

(Necessity) Suppose that $\{\mu_i\}_{i \ge 1}$ are singular and $\mu = \sum_{i \ge 1} \mu_i$ has the additive property. Let $\mu_n$ and $\mu_m$ be any pair of distinct charges. Put $\mu^{\prime} = \sum_{i \ge 1, i \not= n} \mu_i$ and since $\mu^{\prime} \perp \mu_n$ and $\mu = \mu^{\prime} + \mu_n$, we have that by Theorem 4.2 $\mu^{\prime}$ and $\mu_n$ have the additive property and they are strongly singular. Hence we conclude that each $\mu_n$ has the additive property and they are strongly singular one another.
Next we show that $\limsup_i S_i \cap \cup_{i \ge 1} S_i = \emptyset$. Assume to the contrary that $\limsup_i S_i \cap \cup_{i \ge 1} S_i \not= \emptyset$. Fix some $n \ge 1$ with $\limsup_i S_i \cap S_n \not= \emptyset$ and consider the charge $\mu^{\prime} = \sum_{i \ge 1, i \not=n} \mu_i$. Since the support $S^{\prime}$ of $\mu^{\prime}$ is $\cup_{i \ge 1, i \not= n} S_i \cup \limsup_i S_i$, $S^{\prime} \cap S_n \not= \emptyset$ holds and thus $\mu^{\prime}$ and $\mu_n$ are not strongly singular. But this contradicts Theorem 4.2 by the same arguments above, which completes the proof.
\end{prf}

\section{Application to density measures} 
We consider the asymptotic density on natural numbers $\mathbb{N}$, which is one of the most famous finitely additive set functions on a countable space. Let $\mathcal{P}(\mathbb{N})$ be the set of all subsets of $\mathbb{N}$. For $A \in \mathcal{P}(\mathbb{N})$ the asymptotoic density $d(A)$ of $A$ is defined by
\[d(A) = \lim_n \frac{|A \cap [1, n]|}{n} \]
provided the limit exists, where $|X|$ denotes the number of elements of $X \in \mathcal{P}(\mathbb{N})$. Let $\mathcal{D}$ be the set of all subsets $A \in \mathcal{P}(\mathbb{N})$ having the asymptotic density. Then $d$ is obviously a finitely additive set function defined on $\mathcal{D}$. However, unfortunaltely, the triple $(\mathbb{N}, \mathcal{D}, d)$ is not a charge space since the class $\mathcal{D}$ is not an algebra of sets, i.e., $\mathcal{D}$ does not closed under union or intersection. Though $(\mathbb{N}, \mathcal{D}, d)$ itself is not a charge space, one can construct a charge space from the asymptotic density by extending $d$ to the whole $\mathcal{P}(\mathbb{N})$. A charge $\nu$ defined on $\mathcal{P}(\mathbb{N})$ is called a {\it density measure} if it extends the asymptotic density. An example of density meausres can be constructed simply by using the limit along an ultrafilter in place of the usual limit in the definition of the asymptotic density. 

Recall that an ultrafilter on $\mathbb{N}$ is a filter on $\mathbb{N}$ which is not properly contained in any other filters. In particular, an ultrafilter is said to be free if the intersection of its elements is empty.
Let $\mathcal{U}$ be an ultrafilter on $\mathbb{N}$ and $f$ be in $l_{\infty}$ of the set of all real-valued bounded functions on $\mathbb{N}$. Then there exists a unique number $\alpha$ such that $\{n \in \mathbb{N} : |f(n) - \alpha| < \varepsilon \} \in \mathcal{U}$ holds for any $\varepsilon > 0$. The number $\alpha$ is called the {\it limit of $f$ along $\mathcal{U}$} and denoted by $\mathcal{U}\mathchar`-\lim_n f(n) = \alpha$. Now one can define a density meaure through the limit along an ultrafilter.

Let $\mathcal{U}$ be an free ultrafilter on $\mathbb{N}$, let us define a density measure $\nu_{\mathcal{U}}$ by
\[\nu_{\mathcal{U}}(A) = \mathcal{U}\mathchar`-\lim_n \frac{|A \cap [1,n]|}{n}, \quad A \in \mathcal{P}(\mathbb{N}). \]

Let $\mathcal{C}$ be the set of all such density measures. In what follows, we show a necessary and sufficient condition for a density measure in $\mathcal{C}$ to have the additive property.

Note that there exists distinct ultrafilters $\mathcal{U}$ and $\mathcal{U}^{\prime}$ such that $\nu_{\mathcal{U}} = \nu_{\mathcal{U}^{\prime}}$. Thus, introducing an equivalence relation defined by $\mathcal{U} \sim \mathcal{U}^{\prime}$ if and only if $\nu_{\mathcal{U}} = \nu_{\mathcal{U}^{\prime}}$ on the set of all free ultrafilter on $\mathbb{N}$, $\mathcal{C}$ can be regarded as the quotient space by $\sim$. In fact, there exists a seciton such that each representative has a form which is well suited for investigating the corresponding density measure. Now we have to make some preparation before stating the precise assertion.

Let $\beta\mathbb{N}$ be the  Stone-\v{C}ech compactification of $\mathbb{N}$. Note that $\beta\mathbb{N}$ can be characterized by the following property: for any mapping of $\mathbb{N}$ into a compact space $X$, there exists a continuous extension to $\beta\mathbb{N}$. 
Recall that $\beta\mathbb{N}$ can be identified with the set of all ultrafilters on $\mathbb{N}$ in which a basis of open sets are those subsets $\hat{A} = \{\mathcal{U} : A \in \mathcal{U}\}$ for every $A \in \mathcal{P}(\mathbb{N})$. We denote by $\mathbb{N}^* := \beta\mathbb{N} \setminus \mathbb{N}$ the set of all free ultrafilters on $\mathbb{N}$. Then $\mathbb{N}^*$ is also a compact space by the relative topology of $\beta\mathbb{N}$ and obviously a basis of open sets of $\mathbb{N}^*$ are the sets of the form $A^* = \hat{A} \cap \mathbb{N}^*$ for every $A \in \mathcal{P}(\mathbb{N})$.

We now introduce a topological dynamical system on $\mathbb{N}^*$. Let us consider the translation on $\mathbb{N}$:
\[\tau_0 : \mathbb{N} \rightarrow \mathbb{N}, \quad n \mapsto n+1. \]
Embedding the range $\mathbb{N}$ into $\beta\mathbb{N}$, one can regard $\tau_0$ as a mapping of $\mathbb{N}$ into a compact space $\beta\mathbb{N}$. Thus it can be extended to the continuous function $\tau$ of $\beta\mathbb{N}$ into itself:
\[\tau : \beta\mathbb{N} \rightarrow \beta\mathbb{N}. \]
Though $\tau$ is not a homeomorphism, the restriction of $\tau$ to $\mathbb{N}^*$ is a homeomorphism of $\mathbb{N}^*$ onto itself. We still denote it by the same symbol $\tau$ and then the pair $(\mathbb{N}^*, \tau)$ is a topological dynamical system. Let us define the negative semi-orbit of $\mathcal{U} \in \mathbb{N}^*$ by $o_-(\mathcal{U}) := \{\tau^{-n}\mathcal{U} : n = 0, 1, 2, \ldots. \}$. $o_-(\mathcal{U})$ is said to be recurrent if for any neighborhood $U$ of $\mathcal{U}$ and any natural number $N \ge 1$, there exists some $n \ge N$ such that $\tau^{-n}\mathcal{U} \in U$ holds. The set of all such points in $\mathbb{N}^*$ is denoted by $\mathcal{R}_{d, -}$.

Next we extend this discrete flow to a continuous flow which is the suspension of $(\mathbb{N}^*, \tau)$. Let us consider the product space $\mathbb{N}^* \times [0, 1]$ and define an equivalence relation on it by $(\mathcal{U}, 1) \sim (\tau\mathcal{U}, 0)$ for every $\mathcal{U} \in \mathbb{N}^*$. Let $\Omega^*$ be the quotient space of $\mathbb{N}^* \times [0, 1]$ by $\sim$. Then we define a family of homeomorphisms $\tau^s : \Omega^* \rightarrow \Omega^*$ by 
\[\tau^s(\mathcal{U}, t) = (\tau^{[t+s]}\mathcal{U}, t+s-[t+s]), \]
for each $s \in \mathbb{R}$, where $[x]$ denotes the largest integer not exceeding $x$ for a real number $x$. Then it is readily verified that the pair $(\Omega^*, \{\tau^s\}_{s \in \mathbb{R}})$ is a continuous flow. Similarly as above, we define the negative semi-orbit of $\omega \in \Omega^*$ by $o_-(\omega) := \{\tau^{-s}\omega : s \ge 0 \}$. Also, we say that $o_-(\omega)$ is recurrent if for any neighborhood $U$ of $\omega$ and positive real number $R > 0$, there exists some $s \ge R$ such that $\tau^{-s} \omega \in U$ holds. The set of all recurrent points in $\Omega^*$ is denoted by $\mathcal{R}_-$.

Now the following theorem asserts that each element of $\mathcal{C}$ is expressed in a special form, which is convenient to investigate its measure theoretic properties. See [4], [5] for the proof and related results.

\begin{thm}
Let us define the mapping $\Phi: \Omega^* \rightarrow \mathcal{C}$ as follows: Let $\omega = (\mathcal{U}, t) \in \Omega^*$ and $\theta = 2^t$, and let us denote the image $\Phi(\omega)$ of $\omega$ by $\nu_{\omega}$, then we define
\[\nu_{\omega}(A) = \mathcal{U}\mathchar`-\lim_n \frac{|A \cap [1, [\theta \cdot 2^n]]|}{\theta \cdot 2^n}, \quad A \in \mathcal{P}(\mathbb{N}). \]
Then $\Phi$ is a one to one and onto mapping of $\Omega^*$ to $\mathcal{C}$.
\end{thm}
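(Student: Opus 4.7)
The plan is to verify separately that $\Phi$ is well-defined, surjective, and injective.

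First, I would confirm $\Phi(\omega) \in \mathcal{C}$ and that $\Phi$ respects $\sim$. Setting $\psi(k) = [\theta \cdot 2^{k}]$ and $\mathcal{V} = \psi_{\ast}\mathcal{U}$, the map $\psi : \mathbb{N} \to \mathbb{N}$ satisfies $\psi(k) \to \infty$, so $\mathcal{V}$ is a free ultrafilter; using $[\theta \cdot 2^{k}]/(\theta \cdot 2^{k}) \to 1$ one sees $\nu_{\omega} = \nu_{\mathcal{V}}$. The identification $(\mathcal{U},1) \sim (\tau\mathcal{U},0)$ is respected because the defining property $\tau\mathcal{U}\mathchar`-\lim_{k} h(k) = \mathcal{U}\mathchar`-\lim_{k} h(k+1)$ reduces both $\nu_{(\mathcal{U},1)}(A)$ and $\nu_{(\tau\mathcal{U},0)}(A)$ to $\mathcal{U}\mathchar`-\lim_{k} |A \cap [1, 2^{k+1}]|/2^{k+1}$.

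For surjectivity, given $\mathcal{V} \in \mathbb{N}^{\ast}$, I write each $n \in \mathbb{N}$ as $n = 2^{k_{n}+t_{n}}$ with $k_{n} = \lfloor \log_{2} n \rfloor$ and $t_{n} \in [0,1)$. The map $\rho(n) = (k_{n}, t_{n})$ from $\mathbb{N}$ to $\mathbb{N} \times [0,1]$ extends by the universal property of $\beta\mathbb{N}$ to a continuous $\tilde{\rho} : \beta\mathbb{N} \to \beta\mathbb{N} \times [0,1]$; I set $(\mathcal{U}, t) := \tilde{\rho}(\mathcal{V})$, and $k_{n} \to \infty$ forces $\mathcal{U} \in \mathbb{N}^{\ast}$. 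The key identity is $|A \cap [1,n]|/n = F_{A}(k_{n}, t_{n})$, where $F_{A}(k, s) = |A \cap [1, [2^{s} \cdot 2^{k}]]|/(2^{s} \cdot 2^{k})$. I would then establish a Lipschitz-type bound $|F_{A}(k,s) - F_{A}(k,s')| \le C|s-s'|$ uniform in $k$ and $A$ (the floor perturbs $[2^{s}\cdot 2^{k}]$ by $O(|s-s'|\cdot 2^{s+k})$ integers). Applying this with $s = t_{n}$, $s' = t$ and taking the $\mathcal{V}$-limit yields $\nu_{\mathcal{V}}(A) = \mathcal{V}\mathchar`-\lim_{n} F_{A}(k_{n},t) = \mathcal{U}\mathchar`-\lim_{k} F_{A}(k,t) = \nu_{(\mathcal{U},t)}(A)$, where the middle equality is the pushforward property of $n \mapsto k_{n}$.

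For injectivity, I would suppose $\nu_{\omega} = \nu_{\omega'}$ with $\omega = (\mathcal{U},t)$ and $\omega' = (\mathcal{U}', t')$, and recover $t$ first by evaluating on sets of the form $A_{[a,b]} = \bigcup_{k \ge 0}[\lceil 2^{k+a}\rceil, \lceil 2^{k+b}\rceil)$ for $0 \le a < b \le 1$: a geometric-series computation expresses $\nu_{\omega}(A_{[a,b]})$ as an explicit function of $\theta = 2^{t}$, from which $t$ is determined in $[0,1)$ up to the boundary identification at $t \in \{0,1\}$. Once $t = t'$ is matched, the scale-indicator sets $B_{E} = \bigcup_{k \in E}[2^{k}, 2^{k+1})$ for $E \subseteq \mathbb{N}$ give $\nu_{\omega}(B_{E})$ as an explicit expression involving $\mathcal{U}\mathchar`-\lim_{k} \chi_{E}(k)$, and equality for every $E$ forces $\mathcal{U} = \mathcal{U}'$.

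The main obstacle I anticipate is the surjectivity step: because $s \mapsto F_{A}(k,s)$ is not literally continuous (owing to the floor function), the pushforward property of $\tilde{\rho}$ cannot be invoked on the joint variable directly, and the uniform Lipschitz-type estimate across all $A$ is what unlocks the argument. Once that is in hand, well-definedness and injectivity follow by direct computation together with standard ultrafilter manipulations.
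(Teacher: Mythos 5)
The paper itself does not prove Theorem 5.1 (it defers to references [4] and [5]), so there is no in-paper argument to measure yours against; I will assess the proposal on its own. Your well-definedness and surjectivity steps are sound. The pushforward $\mathcal{V}=\psi_{*}\mathcal{U}$ with $\psi(k)=[\theta\cdot 2^{k}]$, together with $[\theta\cdot 2^{k}]/(\theta\cdot 2^{k})\to 1$, does give $\nu_{\omega}=\nu_{\mathcal{V}}$; the compatibility with $(\mathcal{U},1)\sim(\tau\mathcal{U},0)$ is the one-line computation you indicate; and in the surjectivity argument the floor function costs only an additive $O(2^{-k})$ on top of the Lipschitz bound in $s$, which vanishes in the $\mathcal{V}$-limit since $k_{n}\to\infty$, so your identity $\nu_{\mathcal{V}}=\nu_{(\mathcal{U},t)}$ goes through.

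The genuine gap is in the injectivity step. For $B_{E}=\bigcup_{k\in E}[2^{k},2^{k+1})$ one has
\[
\nu_{\omega}(B_{E})=\mathcal{U}\mathchar`-\lim_{n}\Bigl(2^{-n-t}\sum_{k\le n-1,\,k\in E}2^{k}+\chi_{E}(n)\,(1-2^{-t})\Bigr),
\]
and the first summand is \emph{not} a function of $\mathcal{U}\mathchar`-\lim_{k}\chi_{E}(k)$: it depends on the fine structure of $E$ just below $n$, ranging between $0$ and about $2^{-t}$ according to whether $n-1, n-2,\dots$ lie in $E$. For $t$ near $0$ the detectable term $\chi_{E}(n)(1-2^{-t})$ is dominated by this carry-over, so equality of $\nu_{\omega}(B_{E})$ and $\nu_{\omega'}(B_{E})$ for all $E$ does not, as written, force $\mathcal{U}=\mathcal{U}'$. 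The repair needs an extra idea: given $E\in\mathcal{U}\setminus\mathcal{U}'$, first replace $E$ by a subset $E_{0}\in\mathcal{U}$ whose consecutive elements differ by at least $2$ (split $E$ by the parity of the index in its increasing enumeration; one part lies in $\mathcal{U}$ and is still outside $\mathcal{U}'$), and test with blocks adapted to $\theta$, say $A=\bigcup_{k\in E_{0}}([\theta\cdot 2^{k-1}],[\theta\cdot 2^{k}]]$. Then the $n$-th window value is at least $1/2$ when $n\in E_{0}$ and at most $1/4+1/16+\cdots=1/3$ when $n\notin E_{0}$, giving $\nu_{(\mathcal{U},t)}(A)\ge 1/2>1/3\ge\nu_{(\mathcal{U}',t)}(A)$. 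A smaller point: your recovery of $t$ from the sets $A_{[a,b]}$ is fine in principle (their densities exist as ordinary limits and depend only on $t$), but for a fixed pair $(a,b)$ the resulting function of $t$ is not monotone on $[0,1]$, so you genuinely need the whole family, e.g.\ by locating, as $a$ varies, the jump of $\lim_{b\to a^{+}}\nu_{\omega}(A_{[a,b]})/(2^{b}-2^{a})$ between $2^{-t}$ and $2\cdot 2^{-t}$.
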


Before proving the main theorem, we introduce the following auxiliary charges; for $\omega = (\mathcal{U}, t) \in \Omega^*$ and $m = 0, 1, \ldots$, we define a charge $\nu_{\omega, n} : \mathcal{P}(\mathbb{N}) \rightarrow [0, 1]$ by
\[\nu_{{\omega}, m}(A) = \mathcal{U}\mathchar`-\lim_n \frac{|A \cap ([\theta \cdot 2^{n-m-1}], [\theta \cdot 2^{n-m}]]|}{\theta \cdot 2^{n-m-1}}, \quad A \in \mathcal{P}(\mathbb{N}). \]
Then it is obvious that
\[\nu_{\omega} = \sum_{m=0}^{\infty} \frac{1}{2^{m+1}} \nu_{{\omega}, m}. \]

\begin{lem}
For any $\omega = (\mathcal{U}, t) \in \Omega^*$ and $m = 0, 1, 2, \ldots$, $\nu_{\omega, m}$ has the additive property. 
\end{lem}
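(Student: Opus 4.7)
The plan is to verify the additive property directly from its definition, exploiting the fact that $\nu_{\omega, m}$ is an ultrafilter limit of uniform counting measures on the pairwise disjoint finite blocks $I_n := ([\theta \cdot 2^{n-m-1}], [\theta \cdot 2^{n-m}]]$. Setting $B_n := \theta \cdot 2^{n-m-1}$, we have that $\nu_{\omega, m}(C)$ is the $\mathcal{U}$-limit of $|C \cap I_n|/B_n$ for every $C \subseteq \mathbb{N}$. Because the $I_n$ are disjoint, a candidate witness $B$ may be prescribed independently on each block, which reduces the problem to a block-by-block diagonal construction.

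Given an increasing sequence $A_1 \subseteq A_2 \subseteq \cdots$ and $\varepsilon > 0$, put $\alpha := \lim_i \nu_{\omega, m}(A_i)$. Since $\nu_{\omega, m}(A_i) \le \alpha < \alpha + \varepsilon$, the set
\[V_i := \bigl\{n : |A_i \cap I_n|/B_n < \alpha + \varepsilon\bigr\}\]
lies in $\mathcal{U}$, hence so does every intersection $V_1 \cap \cdots \cap V_i$. For each $n$ I let $k(n) \in \{0, 1, 2, \ldots\} \cup \{\infty\}$ be the largest $i$ with $n \in V_1 \cap \cdots \cap V_i$, set $A_0 := \emptyset$ and $A_\infty := \bigcup_{i \ge 1} A_i$, and define
\[B := \bigcup_n \bigl(A_{k(n)} \cap I_n\bigr),\]
so that $B \cap I_n = A_{k(n)} \cap I_n$ for every $n$.

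The verification then splits into two parts. For each fixed $i$, $\{n : k(n) \ge i\} = V_1 \cap \cdots \cap V_i \in \mathcal{U}$, and on that set $A_i \subseteq A_{k(n)}$ makes $(A_i \setminus B) \cap I_n$ empty; hence $\nu_{\omega, m}(A_i \setminus B) = 0$. On the $\mathcal{U}$-set $V_1$ we have $k(n) \ge 1$, so either $k(n)$ is a finite $i$ with $n \in V_i$ (whence $|B \cap I_n|/B_n < \alpha + \varepsilon$ by definition), or $k(n) = \infty$, in which case $A_i \cap I_n$ stabilizes inside the finite block $I_n$ to $A_\infty \cap I_n$ and the bound persists. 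Taking the $\mathcal{U}$-limit yields $\nu_{\omega, m}(B) \le \alpha + \varepsilon$.

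The argument is essentially a diagonalization across the blocks $I_n$, so I do not expect a serious obstacle; the main point of care is tuning the construction of $k(n)$ so that $k(n) \to \infty$ along $\mathcal{U}$ (forcing $\nu_{\omega, m}(A_i \setminus B) = 0$) while simultaneously preserving the density bound on $|B \cap I_n|/B_n$ along $\mathcal{U}$, and correctly handling the boundary case $k(n) = \infty$ via the finiteness of each $I_n$.
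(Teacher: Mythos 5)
Your proposal is correct and follows essentially the same route as the paper: a block-by-block diagonal construction of $B$ using a decreasing chain of ultrafilter sets (your $V_1 \cap \cdots \cap V_i$ playing the role of the paper's $X_i$), with $B \cap I_n$ copied from $A_{k(n)} \cap I_n$ on the layer $n$ belongs to. The only differences are cosmetic: you use a one-sided bound with a fixed $\varepsilon$ where the paper uses a two-sided $1/i$-approximation (so the paper gets $\nu_{\omega,m}(B)=\alpha$ exactly, while you get the sufficient $\le \alpha+\varepsilon$), and you explicitly handle the boundary case $k(n)=\infty$ via $A_\infty$, a case the paper's ``otherwise $\emptyset$'' clause glosses over.
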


\begin{prf}
Given an increasing sequence $A_1 \subseteq A_2 \subseteq \ldots \subseteq A_i \subseteq \ldots$ of $\mathcal{P}(\mathbb{N})$. Set $\lim_{i \to \infty} \nu_{{\omega}, m}(A_i) = \alpha$. We take a decreasing sequence $\{X_i\}_{i \ge1}$ of $\mathcal{U}$ such that
\[\left|\frac{|A_i \cap ([\theta \cdot 2^{n-m-1}], [\theta \cdot 2^{n-m}]]|}{\theta \cdot 2^{n-m-1}} - \nu_{\omega, m}(A_i) \right| < \frac{1}{i}  \]
whenever $n \in X_i$. Then we define $B \subseteq \mathbb{N}$ as $B \cap ([\theta \cdot 2^{n-m-1}], [\theta \cdot 2^{n-m}]] = A_i \cap ([\theta \cdot 2^{n-m-1}], [\theta \cdot 2^{n-m}]]$ if $n \in X_i \setminus X_{i+1}$ and $B \cap ([\theta \cdot 2^{n-m-1}], [\theta \cdot 2^{n-m}]] = \emptyset$ otherwise.
First we show that $\nu_{{\omega}, m}(B) = \alpha$. For any $\varepsilon > 0$, take $i \in \mathbb{N}$ with $\varepsilon > \frac{1}{i}$ and $\alpha - \nu_{\omega, m}(A_i) < \varepsilon$. Then for any $n \in X_i$, there exists some $j_n \ge i$ such that $n \in X_{j_n} \setminus X_{j_n+1}$. Hence
\begin{align}
\left|\nu_{{\omega}, m}(B) - \alpha \right| &= \mathcal{U}\mathchar`-\lim_n \left|\frac{|B \cap ([\theta \cdot 2^{n-m-1}], [\theta \cdot 2^{n-m}]]|}{\theta \cdot 2^{n-m-1}} - \alpha \right| \notag \\
&\le \limsup_{n \in X_i} \left|\frac{|B \cap ([\theta \cdot 2^{n-m-1}], [\theta \cdot 2^{n-m}]]|}{\theta \cdot 2^{n-m-1}} - \nu_{\omega, m}(A_{j_n}) \right| + \left|\nu_{\omega, m}(A_i) - \alpha \right| \notag \\
&= \limsup_{n \in X_i} \left|\frac{|A_{j_n} \cap ([\theta \cdot 2^{n-m-1}], [\theta \cdot 2^{n-m}]]|}{\theta \cdot 2^{n-m-1}} - \nu_{\omega, m}(A_{j_n}) \right| + \left|\nu_{\omega, m}(A_i) - \alpha \right| \notag \\
&\le \frac{1}{j_n} + \varepsilon \le \frac{1}{i} + \varepsilon < 2\varepsilon. \notag
\end{align}
Since $\varepsilon > 0$ is arbitrary, we have $\nu_{{\omega}, m}(B) = \alpha$. Next we show that $\nu_{{\omega}, m}(A_i \setminus B) = 0$ for every $i \ge 1$. For any $n \in X_i$, there exists some $j_n \ge i$ such that $n \in X_{j_n} \setminus X_{{j_n}+1}$. Hence we have
\[\nu_{{\omega}, m}(A_i \setminus B) \le \limsup_{n \in X_i} \left|\frac{|(A_i \setminus B) \cap ([\theta \cdot 2^{n-m-1}], [\theta \cdot 2^{n-m}]]|}{\theta \cdot 2^{n-m-1}} \right| = 0 \]
since $B \cap ([\theta \cdot 2^{n-m-1}], [\theta \cdot 2^{n-m}]] = A_{j_n} \cap ([\theta \cdot 2^{n-m-1}], [\theta \cdot 2^{n-m}]]$ and $A_i \subseteq A_{j_n}$. So we obtain the result.

\end{prf}

Based on the above lemma, now we prove the main result:

\begin{thm}
$\nu_{\omega}$ has the additive property if and only if $\omega \not\in \mathcal{R}_-$.
\end{thm}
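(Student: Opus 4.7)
The plan is to apply Theorem 4.3 to the decomposition $\nu_\omega = \sum_{m \ge 0} 2^{-(m+1)} \nu_{\omega, m}$ given just before Lemma 5.1, and then translate the resulting topological condition into the dynamical recurrence condition on $\omega$. By Lemma 5.1 each summand has the additive property (and this is unaffected by scalar multiplication). Before invoking Theorem 4.3, I verify that the family $\{\nu_{\omega, m}\}_{m \ge 0}$ is pairwise strongly singular. Since $\tau$ acts freely on $\mathbb{N}^*$, the shifted ultrafilters $\mathcal{U}_m := \tau^{-m}\mathcal{U}$ are pairwise distinct. For $m \ne m'$, pick $X \subseteq \mathbb{N}$ with $X \in \mathcal{U}_m$ and $X^c \in \mathcal{U}_{m'}$, and set $D_X := \bigcup_{k \in X} J_k$ with $J_k := (\theta \cdot 2^{k-1}, \theta \cdot 2^k]$. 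From the definition of $\nu_{\omega, m}$ as a $\mathcal{U}$-limit of densities on the blocks $J_{n-m}$ one obtains $\nu_{\omega, m}(D_X^c) = 0$ and $\nu_{\omega, m'}(D_X) = 0$.

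Theorem 4.3 therefore reduces the additive property of $\nu_\omega$ to the topological condition
\[ \bigl(\limsup_m S_m\bigr) \cap \bigl(\bigcup_m S_m\bigr) = \emptyset, \]
where $S_m \subseteq \beta\mathbb{N}$ is the support of $\nu_{\omega, m}$. The heart of the proof is to interpret these supports in dynamical terms. Each $l \in \mathbb{N}$ (outside a finite set) lies in a unique block $J_{k(l)}$ with fractional dyadic scale $t(l) := \log_2 \bigl(l/(\theta \cdot 2^{k(l)-1})\bigr) \in (0, 1]$; the assignment $l \mapsto (k(l), t(l))$ induces, via Stone--\v{C}ech functoriality together with the suspension identification $(\mathcal{V}, 1) \sim (\tau\mathcal{V}, 0)$, a continuous map $\Psi : \beta\mathbb{N} \to \Omega^*$. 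The uniform character of $\nu_{\omega, m}$ within each block forces $\Psi(S_m)$ to be the entire fiber $\{\mathcal{U}_m\} \times [0, 1]$ over $\mathcal{U}_m$ in $\Omega^*$, so that $\Psi\bigl(\bigcup_{m \ge N} S_m\bigr)$ coincides, up to an initial segment, with the tail of the backward orbit $\{\tau^{-s}\omega : s \ge N\}$, and $\Psi(\limsup_m S_m)$ with the set of accumulation points of $o_-(\omega)$ in $\Omega^*$.

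With this dictionary, $(\limsup_m S_m) \cap (\bigcup_m S_m) \ne \emptyset$ is equivalent to some $\tau^{-m_0}\omega$ being an accumulation point of $o_-(\omega)$; by the $\tau^s$-equivariance of the whole construction this is in turn equivalent to $\omega$ itself being an accumulation point of its own backward orbit, which is the defining condition $\omega \in \mathcal{R}_-$. Hence $\nu_\omega$ has the additive property if and only if $\omega \notin \mathcal{R}_-$.

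The main obstacle is the construction and precise analysis of $\Psi$: one must show that $S_m$ really surjects onto the whole fiber $\{\mathcal{U}_m\} \times [0, 1]$ (so that the limsup condition detects the full fractional-scale continuum and not merely the block index in $\mathbb{N}^*$), and that $\Psi$ is well-behaved enough with respect to closures for the $\limsup$ in $\beta\mathbb{N}$ to transfer cleanly to the set of orbit-accumulation points in $\Omega^*$. Once this topological dictionary is in place, the equivalence with backward recurrence is a direct translation of the definition of $\mathcal{R}_-$.
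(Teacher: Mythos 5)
Your reduction via Theorem 4.3 and Lemma 5.1 to the condition $(\limsup_m S_m) \cap (\bigcup_m S_m) = \emptyset$ is the same first step the paper takes, and your explicit verification of pairwise strong singularity of the $\nu_{\omega,m}$ (via the distinctness of the shifted ultrafilters $\tau^{-m}\mathcal{U}$) is correct and in fact more careful than the paper, which leaves it implicit. The problem is the second half. The entire content of the theorem is the translation of the support condition into backward recurrence, and that is exactly the part you defer: you posit a continuous $\Psi : \beta\mathbb{N} \to \Omega^*$, assert that $\Psi(S_m)$ is the full fiber over $\tau^{-m}\mathcal{U}$ and that $\Psi(\limsup_m S_m)$ is the accumulation set of $o_-(\omega)$, and then yourself flag these assertions as ``the main obstacle.'' As written this is a plan, not a proof. (There is also a definitional wrinkle: $l \mapsto (k(l), t(l))$ has first coordinate in $\mathbb{N}$, not $\mathbb{N}^*$, so the Stone--\v{C}ech extension naturally lands in the suspension of $(\beta\mathbb{N}, \tau)$ rather than in $\Omega^*$, and one must check separately that $\mathbb{N}^*$ is carried into $\Omega^*$.)

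More seriously, even granting the two asserted identities about $\Psi$, the step ``$(\limsup_m S_m) \cap (\bigcup_m S_m) \neq \emptyset$ is equivalent to the accumulation set meeting the orbit'' does not follow: $\Psi$ is far from injective, so $\Psi(\limsup_m S_m) \cap \Psi(S_{m_0}) \neq \emptyset$ does not imply $(\limsup_m S_m) \cap S_{m_0} \neq \emptyset$, and this is precisely the direction needed to show that recurrence destroys the additive property. The paper closes this gap by arguing directly in $\mathbb{N}^*$ with no quotient map: given $\omega \in \mathcal{R}_-$ and an arbitrary clopen neighborhood $I_0^*$ of $S_0$, it extends the block-density function $f(m) = |I_0 \cap J_m|/\theta 2^{m-1}$ continuously to $\overline{f}$ on $\mathbb{N}^*$, notes $\overline{f}(\mathcal{U}) = 1$, and uses recurrence of $\mathcal{U}$ under $\tau^{-1}$ to produce arbitrarily large $n$ with $\nu_{\omega,n}(I_0) = 2^n \overline{f}(\tau^{-n}\mathcal{U}) > 0$, i.e.\ $I_0^* \cap S_n \neq \emptyset$; compactness then forces $S_0 \cap \limsup_m S_m \neq \emptyset$. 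The other direction is likewise handled by exhibiting explicit clopen sets $I_m = \bigcup_{n \in \tau^{-m}X_m} J_n$ that separate $S_0$ from every $S_m$, $m \geq 1$, when $\mathcal{U}$ is not backward recurrent. You would need to supply arguments of this kind, or else fully construct $\Psi$ and replace the intersection-of-images inference by a direct argument, for the proof to go through.
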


\begin{proof}
Let $S_m$ = supp $\nu_{{\omega}, m}$ and $S$ = supp $\nu_{\omega}$. Then by Theorem 4.3 and the fact mentioned above it is equivalent to show that 
\[\left(\limsup_m S_m \right) \bigcap \bigcup_{i \ge 1} S_i = \emptyset \ \text{if and only if} \ \omega \not\in \mathcal{R}_-. \] 

Assume that $\omega = (\mathcal{U}, t) \not\in \mathcal{R}_-$ and thus $\mathcal{U} \not\in \mathcal{R}_{d, -}$. This means that there exists some $X \in \mathcal{U}$ such that $X^* \cap o_-(\mathcal{U}) \setminus \{\mathcal{U}\} = \emptyset$. Put
\[I_0 = \bigcup_{n \in X} ([\theta \cdot 2^{n-1}], [\theta \cdot 2^n]] \]
and it is obvious that $\nu_{{\omega}, 0}(I_0) = 1$, which means that $S_0 \subseteq I_0^*$, that is, $I_0^*$ is a neighborhood of $S_0$. Now we show that $I_0^* \cap S_m = \emptyset$ for every $m \ge 1$. In fact, take $X_m \in \mathcal{U}$ such that $\tau^{-m} X_m \cap X = \emptyset$ and put 
\[I_m = \bigcup_{n \in \tau^{-m}X_m}([\theta \cdot 2^{n-1}], [\theta \cdot 2^n]]. \]
Then we have $\nu_{{\omega}, m}(I_m) =1$ and thus $S_m \subseteq I_m^*$. Since we have assumed that $\tau^{-m} X_m \cap X = \emptyset$, we have $I_0^* \cap I_m^* = \emptyset$. Hence for the neighborhood $I_0^*$ of $S_0$ we have that $I_0^* \cap S_m= \emptyset$ for all $m \ge 1$ and thus $\limsup_m S_m \cap S_0 = \emptyset$. In the same way, we can show that $\limsup_m S_m \cap S_i = \emptyset$ for every $i \ge 1$ and thus we have $\limsup_m S_m \cap \cup_{i \ge 1} S_i = \emptyset$.

\medskip
On the other hand, assume that $\omega \in \mathcal{R}_-$, i.e., $\mathcal{U}$ is in the closure of $o_-(\mathcal{U})$. Then notice that for any $X \in \mathcal{U}$ and positive integer $N > 0$ there exists some $n \ge N$ such that $\tau^{-n} \mathcal{U} \in X^*$. We show that for any neighborhood $I_0^*$ of $S_0$ where $I_0 \in \mathcal{P}(\mathbb{N})$ and positive integer $N$, there is some $n \ge N$ such that $I_0^* \cap S_n \not= \emptyset$, which immediately implies that $\limsup_m S_m \cap \cup_{i \ge 0} S_i \not= \emptyset$. Let $f \in l_{\infty}$ be the function defined by
\[f(m) = |I_0 \cap ([\theta \cdot 2^{m-1}], [\theta \cdot 2^m]]|/\theta \cdot 2^{m-1}, \quad m = 1,2, \ldots. \] 
Then $\nu_{{\omega}, 0}(I_0) = \overline{f}(\mathcal{U})$ holds by the definition of $\nu_{\omega, 0}$. Since $\overline{f}(\mathcal{U}) = \nu_{{\omega}, 0}(\mathbb{N}) =1$ and $\overline{f}$ is continuous on $\mathbb{N}^*$, there exists a neighborhood $X^*$ of $\mathcal{U}$ such that $\mathcal{U}^{\prime} \in X^*$ implies $\overline{f}(\mathcal{U}^{\prime}) > 0$. Let $n \ge N$ be any integer such that $\tau^{-n}\mathcal{U} \in X$. Then $\nu_{{\omega}, n}(I_0) = 2^n \cdot \overline{f}(\tau^{-n}\mathcal{U}) > 0$, which means that $S_n \cap I_0^* \not= \emptyset$. We completes the proof.
\end{proof} 

\bigskip


\begin{thebibliography} {15}
\bigskip
\bigskip

\bibitem {book1} A. Basile, K. P. S. Bhaskara Rao, {\em Completeness of $L_p$-Spaces in the Finitely 
Additive Setting and Related Stories,} J. Math. Anal. Appl. \textbf{248}, (2000) 588-624.
\bibitem {book3} K.P.S. Bhaskara Rao, M. Bhaskara Rao, {\em Theory of charges,} Academic Press (1983). 
\bibitem {book4} A. Blass, R. Frankiewicz, G. Plebanek, C. Ryll-Nardzewski, {\em A note on extensions of asymptotic density,} Proc. Amer. Math. Soc. \textbf{129} (2001) 3313-3320.
\bibitem {book4} R. Kunisada, {\em Density measures and additive property}, J. Number Theory. \textbf{176} (2017), 184-203.
\bibitem {book5} R. Kunisada, {\em On a relation between density measures and a certain flow}, Proc. Amer. Math. Soc. \textbf{147} (2019) 1941-1951.

\end{thebibliography}
\end{document}